\newtheorem{theo}{Theorem}[section]
\newtheorem{lem}[theo]{Lemma}
\newtheorem{prop}[theo]{Proposition}
\newtheorem{cor}[theo]{Corollary}
\newtheorem{defn}[theo]{Definition}
\newtheorem{rem}[theo]{Remark}
\newtheorem{assump}[theo]{Assumption}
\newtheorem{exa}[theo]{Example}
\newenvironment{proof}{{\bf \noindent Proof.} }{\hfill $\Box$  \smallskip}
\def\R{\mathbb{R}}
\def\C{\mathbb{C}}
\def\Z{\mathbb{Z}}
\def\N{\mathbb{N}}
\def\No{{\mathbb{N}_0}}
\def\Rn{{\mathbb{R}^n}}
\def\epsilon{\varepsilon}
\def\seqo#1#2{(#1_#2)_{#2 \in \No}}
\def\supp{{\rm supp}\;}
\begin{document}

\title{On generalized Besov and Triebel-Lizorkin spaces of regular distributions\footnote{Research partially supported by {\it FEDER} funds through {\it COMPETE}--Operational Programme Factors of Competitiveness (``Programa Operacional Factores de Competitividade'') and by Portuguese funds through the {\it Center for Research and Development in Mathematics and Applications} (University of Aveiro) and the Portuguese Foundation for Science and Technology (``FCT--Funda\c c\~ao para a Ci\^encia e a Tecnologia''), within project PEst-C/MAT/UI4106/2011 with COMPETE number FCOMP-01-0124-FEDER-022690.}}
\author{Ant\'onio M. Caetano\footnote{Center for R\&D in Mathematics and Applications, Department of Mathematics, University of Aveiro, 3810-193 Aveiro, Portugal,
{\tt acaetano@ua.pt} (corresponding author).} \hspace{1mm} and Hans-Gerd Leopold\footnote{Mathematisches Institut, Friedrich-Schiller-Universit\"at Jena, D-07737 Jena, Germany,
{\tt hans-gerd.leopold@uni-jena.de}.}
}
\date{}

\maketitle

\begin{abstract}
We establish conditions on the parameters which are both necessary and sufficient in order that Besov and Triebel-Lizorkin spaces of generalized smoothness contain only regular distributions. We also connect this with the possibility of embedding such spaces in some particular Lebesgue spaces.
\end{abstract}

\hspace*{4mm}{\noindent\footnotesize{\it Mathematics Subject
Classification 2000:}} 46E35, 42A55.

{\em Keywords:} Besov spaces, Triebel-Lizorkin spaces, generalized smoothness, regular distributions, standardization, atomic decompositions, Fourier series.

\section{Introduction}
\label{Introd}
The main aim of this paper is to describe completely, in terms of their parameters, when the generalized Besov and Triebel-Lizorkin spaces $B^{\sigma,N}_{p,q}(\Rn)$ and $F^{\sigma,N}_{p,q}(\Rn)$ contain only regular distributions. In other terms, we aim to characterize the relations
$$B^{\sigma,N}_{p,q}(\Rn) \subset L_1^{\rm loc}(\Rn)$$
and
$$F^{\sigma,N}_{p,q}(\Rn) \subset L_1^{\rm loc}(\Rn)$$
in terms of the behaviour of $\sigma$, $N$, $p$ and $q$.

Besides the intrinsic interest of such a question within the theory of those spaces, such a characterization might also be useful when calculating with distributions belonging to them, as the possibility of representing distributions by functions naturally leads to simplifications.

A final answer to such a question in the context of classical spaces $B^s_{p,q}(\Rn)$ and $F^s_{p,q}(\Rn)$ was given in \cite[Theorem 3.3.2]{ST95}:

\begin{theo}
\label{classical}
\begin{enumerate}
    \item Let $s \in \R$, $0<p< \infty$ and $0<q \leq \infty$. Then the following two assertions are equivalent: \\
    1.1 $\; F^s_{p,q} \subset L_1^{\rm loc}$ \\
    1.2 \begin{tabular}[t]{lll}
        either $\; 0<p<1$, & $s \geq n(\frac{1}{p}-1)$, & $0<q \leq \infty$ \\
        or $\quad 1 \leq p < \infty $, & $s>0$, &  $0<q \leq \infty$ \\
        or $\quad 1 \leq p < \infty $, & $s=0$, &  $0<q \leq 2$
\end{tabular}
    \item Let $s \in \R$, $0<p \leq \infty$ and $0<q \leq \infty$. Then the following two assertions are equivalent: \\
  2.1 $\; B^s_{p,q} \subset L_1^{\rm loc}$ \\
    2.2 \begin{tabular}[t]{lll}
        either $\; 0<p \leq \infty$, & $s > n(\frac{1}{p}-1)_+$, & $0<q \leq \infty$ \\
        or $\quad 0 < p \leq 1 $, & $s = n(\frac{1}{p}-1)$, &  $0<q \leq 1$ \\
        or $\quad 1 < p \leq \infty $, & $s=0$, &  $0<q \leq \min\{ p,2 \}$
\end{tabular}
\end{enumerate}
\end{theo}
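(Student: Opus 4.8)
In each part the two assertions are equivalent, so the plan is to prove separately the implications ``conditions $\Rightarrow$ inclusion'' (sufficiency) and ``inclusion $\Rightarrow$ conditions'' (necessity). Two preliminary reductions organise the work. First, since the inclusion of a quasi-Banach space continuously embedded in $\mathcal{S}'$ into $L_1(K)$ (with $K$ compact) has closed graph, the property $X\subset L_1^{\rm loc}$ is equivalent to a local a priori estimate $\|f\|_{L_1(K)}\lesssim\|f\mid X\|$; hence necessity reduces to exhibiting, in each forbidden parameter range, either one singular distribution lying in the space or a sequence of smooth functions saturating this estimate. Second, the elementary monotonicities $B^s_{p,q_0}\hookrightarrow B^s_{p,q_1}$ and $B^{s_0}_{p,q}\hookrightarrow B^{s_1}_{p,q}$ (for $q_0\le q_1$, $s_0\ge s_1$) and their $F$-analogues let me restrict attention to the boundaries of the admissible regions, everything in the interior following by embedding.

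\textbf{Sufficiency.} Here I would run entirely on known embeddings. For $1\le p<\infty$ and $s>0$ one has $F^s_{p,q}\hookrightarrow L_p\subset L_1^{\rm loc}$, and likewise $B^s_{p,q}\hookrightarrow B^0_{p,1}\hookrightarrow L_p$; for $p<1$ with $s>n(\frac1p-1)$ the lifting Sobolev embedding $F^s_{p,q}\hookrightarrow F^{s_1}_{1,q}$ (resp.\ its $B$-counterpart), $s_1>0$, reduces to the previous case. The delicate boundary points are treated by sharper tools: for $s=0$, $1<p<\infty$, one uses the Littlewood--Paley identity $F^0_{p,2}=L_p$ (and $F^0_{1,2}=h_1\subset L_1^{\rm loc}$), so $F^0_{p,q}\hookrightarrow F^0_{p,2}$ settles $q\le2$, while $B^0_{p,q}\hookrightarrow F^0_{p,2}=L_p$ holds precisely when $q\le\min\{p,2\}$, which is the stated threshold. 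Finally the critical line $p<1$, $s=n(\frac1p-1)$ for the $F$-scale --- where \emph{all} $q$ must be admitted --- is exactly where the Franke embedding $F^{n(1/p-1)}_{p,q}\hookrightarrow B^0_{1,p}$ (valid for every $q$, its target microscopic index being $p\le1$) does the job, since $B^0_{1,p}\subset L_1^{\rm loc}$ by the $B$-part proved independently.

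\textbf{Necessity.} The counterexamples I would build are of three types. (i) For $s<0$ (any admissible $p,q$), take a lacunary series times a fixed bump, $f=\sum_j 2^{j|s|}\varepsilon_j\,e^{i2^jx_1}\eta(x)$ with $(\varepsilon_j)\in\ell_q$: since each summand sits in a single dyadic frequency block, $\|f\mid B^s_{p,q}\|\sim\|\eta\mid L_p\|\,\|(\varepsilon_j)\|_{\ell_q}<\infty$, whereas testing $f$ against $e^{-i2^Jx_1}\zeta$ forces the coefficients of any $L_1^{\rm loc}$ representative to be unbounded, a contradiction. (ii) For $p<1$ and $s<n(\frac1p-1)$ the Dirac distribution already lies in $B^s_{p,q}$ (its blocks have $L_p$-norm $\sim 2^{jn/p'}$) and is manifestly not regular; for the endpoint $s=n(\frac1p-1)$ with $q>1$ a superposition $\sum_k c_k\,2^{kn}a(2^kx)$ of dilated non-negative atoms, $(c_k)\in\ell_q\setminus\ell_1$, has finite norm but infinite local $L_1$-mass. (iii) For $s=0$ the same lacunary series with $(\varepsilon_j)\in\ell_q\setminus\ell_2$ is, by Zygmund's theorem on lacunary series, not in $L_1^{\rm loc}$, which settles $q>2$ (all $p$ in the $F$-scale, and $p\ge2$ in the $B$-scale).

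\textbf{Main obstacle.} The $B$- and $F$-norms above coincide for a frequency-isolated series, which is precisely why lacunary functions cannot separate the $F$-threshold $q\le2$ from the $B$-threshold $q\le\min\{p,2\}$. The genuinely hard case is therefore the necessity for the $B$-scale when $1<p<2$ and $p<q\le2$: here every lacunary example has $\ell_2$ coefficients and is regular, so one must fill each dyadic block with many frequencies carrying Rudin--Shapiro or random signs and exploit the optimal type $\min\{p,2\}=p<2$ of $L_p$. The point is to keep $(\|b_j\mid L_p\|)_j$ in $\ell_q$ while forcing $\sum_j b_j$ to diverge in $L_1^{\rm loc}$; making the two estimates sharp simultaneously --- via Khintchine's inequality and Littlewood--Paley theory, and transferring the periodic construction to $\Rn$ --- is where the real work lies and where the exponent $\min\{p,2\}$ is seen to be optimal.
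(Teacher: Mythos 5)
First, a point of reference: the paper does not prove Theorem \ref{classical} at all --- it is quoted from Sickel and Triebel \cite{ST95} as known background, so there is no in-paper proof to compare yours against. Your outline is nevertheless close in spirit to how the paper proves its \emph{generalized} versions (Theorems \ref{1_nec_suf_B} and \ref{1_nec_suf_F}): sufficiency by reduction to $F^0_{p,2}=L_p$, $F^0_{1,2}=h_1$ and Jawerth--Franke type embeddings; necessity by a closed-graph reduction plus extremal functions, with lacunary series and Proposition \ref{Kah} handling the $q>2$ threshold. The sufficiency half of your argument is essentially sound (modulo the unstated but routine $B$-endpoint $s=n(\frac1p-1)$, $q\le1$, via $B^{n(1/p-1)}_{p,q}\hookrightarrow B^0_{1,1}\hookrightarrow L_1$).

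The necessity half has two genuine problems. (a) In your step (ii), at the endpoint $s=n(\frac1p-1)$, $p\le 1$, the building blocks $2^{kn}a(2^kx)$ with $a\ge 0$ are \emph{not} admissible atoms for $B^{n(1/p-1)}_{p,q}$: at this critical smoothness the atomic decomposition requires vanishing moments, and indeed since $\widehat a(0)\neq 0$ every low-frequency block of $2^{kn}a(2^k\cdot)$ has $L_p$-norm $\sim 2^{jn(1-1/p)}$, which gives $\|2^{kn}a(2^k\cdot)\mid B^{n(1/p-1)}_{p,q}\|\gtrsim k^{1/q}$; so your superposition need not lie in the space at all, and the intended lower bound $\int|f|\ge\sum_k c_k\int a$ collapses once the moment conditions are imposed. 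The fix --- which is how the paper handles the analogous case in part (ii) of the proof of Theorem \ref{1_nec_suf_B} --- is to take a basic function with vanishing moments up to order $L$ that is still bounded below on a cube around the origin (Proposition \ref{basic function}) and to estimate $\int|f|$ from below on disjoint annular passages $P_m$ on which only the first $m$ dilates contribute, all with a fixed sign. (b) The case you correctly single out as hardest --- necessity of $q\le p$ for $B^0_{p,q}$ when $1<p<2$ --- is only described, not proved; ``Rudin--Shapiro signs plus Khintchine'' is a plausible route, but the quantitative matching of the $\ell_q(L_p)$ norm against the divergence of the local $L_1$ mass is precisely the content of the theorem there. For comparison, the paper's generalized proof packs $M_j\sim 2^{jn}\tilde\gamma_j$ disjointly supported atoms at level $j$ into shrinking slabs $R_j$ and then dualizes with the reverse H\"older inequality (Proposition \ref{Inverse Hoelder}); some such completed construction is needed before your argument closes.
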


The spaces of generalized smoothness $B^{\sigma,N}_{p,q}(\Rn)$ and $F^{\sigma,N}_{p,q}(\Rn)$ in which we intend to study the same problem are natural generalizations of the classical Besov and Triebel-Lizorkin spaces in the direction of generalizing the smoothness and the partition in frequency. Now, instead of $(2^{sj})_j$, for some $s \in \R$, the smoothness will be controlled by a general so-called admissible sequence $\sigma := (\sigma_j)_j$, whereas the splitting in frequency will also be controlled by an admissible sequence $N := (N_j)_j$ more general than the classical $(2^j)_j$.

Such spaces have some history:

Originally they were introduced by Goldman and Kalyabin in the middle of the seventies of the last century with the help of differences and general weight functions and on the basis of expansions in series of entire analytic functions, respectively. In both cases these function spaces were subspaces of $L_p(\Rn)$, $1 < p < \infty$, by definition, therefore the question under which conditions they can contain or not contain singular distributions was pointless.

Later on spaces of generalized smoothness appeared naturally by real interpolation with function parameter, were used to describe compact and limiting embeddings with the help of the finer tuning given by the smoothness parameter and showed up also in connection with generalized $d$-sets and $h$-sets (special fractals) and function spaces defined on them, as well as in probability theory as generalized Bessel potential spaces.

For a historical survey up to the end of 2000, see \cite{FaLe06}.

As can be noticed by comparing our main assertions in Theorems \ref{1_nec_suf_B} and \ref{1_nec_suf_F} below with the classical counterpart recalled in Theorem \ref{classical} above, it is not at all clear why the latter should generalize in that way. As a matter of fact, it was somewhat of a surprise to us that the characterization could be done in such a neat way, specially in the cases where a comparison between the numbers $p$, $q$ and 2 seemed to be in order. We stress that we get a characterization, and not mere sufficient conditions. The bulk of the work has, indeed, to do with the proof that the guessed conditions are necessary. The tools used there rely heavily on the useful Proposition \ref{Inverse Hoelder}, which we denote by ``a reverse H\"{o}lder's inequality result'', and on the consideration of suitable sets of extremal functions. These are, for most of the cases, inspired by the possibility of representing the elements of the functions spaces under study by means of infinite linear combinations of atoms. Nevertheless, for the tricky cases given by the last lines in Theorems \ref{1_nec_suf_B} and \ref{1_nec_suf_F} we had to resort to lacunary Fourier series (and standardization) for that effect (by the way, Theorem \ref{functionseq} might also have independent interest).

As a by-product of our main results, we also extend to our framework the classical result \cite[Cor. 3.3.1]{ST95}, which states that the Besov and Triebel-Lizorkin spaces of integrability parameter $p\not= \infty$ which are completely formed by regular distributions are exactly those which continuously embed in the Lebesgue spaces of power $\max\{1,p\}$ --- cf. Corollary \ref{corollary}.

\medskip

We would like to thank H. Triebel for asking us what could be answered with respect to the main question dealt with in this paper, which prompted us to this research, and for some helpful occasional discussions about it.

\section{Preliminaries}
\label{Prelim}





We start by fixing some general notation.

Since all the Besov and Triebel-Lizorkin spaces under consideration are spaces on $\Rn$, we shall omit the $\Rn$ from the notation.

Given any $r\in(0,\infty]$, we denote by $r'$ the number, possibly $\infty$, defined through the expression $\frac{1}{r'} := \Big( 1 - \frac{1}{r} \Big)_+$; in the particular case when $1 \leq r \leq \infty$, $\, r'$ is the same as the conjugate exponent usually defined through $\, \frac{1}{r}+\frac{1}{r'}=1$.

The symbol $\hookrightarrow$ is used for continuous embedding from one space into other.

Unimportant positive constants might be denoted generically by the same letter, usually $c$, with additional indices to distinguish them in case they appear in the same or close expression.

Before introducing the spaces we want to consider, we define and make some comments about the type of sequences which will be used as parameters.

 \begin{defn}
 A sequence  $\sigma =(\sigma _j)_{j\in\No}$, with $\sigma _j>0$,
 is called an admissible sequence if there are two constants
 $0<d_0 =d_0(\sigma )\le d_1=d_1(\sigma )<\infty$ such that
\begin{equation}
\label{sigma}
                  d_0\, \sigma _j\le \sigma _{j+1}\le d_1\sigma _j
   \quad \mbox{for any}\quad j\in\No .
\end{equation}
  \end{defn}

 \begin{defn}
 \label{equiv}
           Two admissible sequences $\sigma = (\sigma _j)_{j\in\No}$
           and $\tau =(\tau _j)_{j\in\No}$ are called equivalent if there
           exist constants $C_1$ and $C_2$ such that
           $$
              0< C_1\le \frac{\sigma _j}{\tau _j}\le C_2<\infty 
              \quad\mbox{for any}\quad j\in\No .
           $$
 \end{defn}

To illustrate the flexibility of (\ref{sigma}) we refer the reader to
some examples discussed in \cite{FaLe06} or \cite[Chap. 1]{Bri-tese}.

The following definition, of Boyd indices of a given admissible sequence, is taken from \cite{BrMo03}:

\begin{defn}
Let
$$
  \overline{\sigma}_j:=  \sup\limits _{k\ge 0}\frac{\sigma _{j+k}}{\sigma
_k}
\quad\mbox{and}\quad
 \underline{\sigma}_j:= \inf\limits _{k\ge 0}\frac{\sigma _{j+k}}{\sigma _k}, \quad j\in\No.
$$
Then
\begin{equation}
\label{Boyd} \nonumber
  \alpha _{\sigma}:=\inf \limits _{j\in\N} \frac{\log_2  \overline{\sigma}_j}{j} = \lim_{j \to \infty } \frac{\log_2  \overline{\sigma}_j}{j}
\quad\mbox{and}\quad
   \beta _{\sigma}:=\sup \limits _{j\in\N} \frac{\log_2  \underline{\sigma}_j}{j} = \lim _{j\to \infty} \frac{\log_2  \underline{\sigma}_j}{j}
\end{equation}
are the (upper and respectively lower) Boyd indices of the sequence $\sigma$.
\end{defn}

\begin{rem} \label{estimations sigma}
{\em Obviously it holds $$ 
\log_2 d_0 \le \beta_\sigma \le \alpha_\sigma \le \log_2 d_1
$$
and for each $\varepsilon > 0$ there exist constants
$c_{0,\varepsilon}
> 0 $ and $c_{1,\varepsilon}  > 0  $ such that
$$ c_{0,\varepsilon}\; 2^{(\beta_\sigma - \varepsilon)j} \le \sigma_j \le
c_{1,\varepsilon}\;
2^{(\alpha_\sigma + \varepsilon)j} \;\;.$$ 
}
\end{rem}

\begin{rem}
\label{equivBoyd}
{\em (i)
    It is easy to see that the Boyd indices of an admissible sequence
    $\sigma$ remain unchanged
    when replacing $\sigma$ by an equivalent
    sequence in the sense of Definition~\ref{equiv}.

(ii)
     Given an admissible sequence $\sigma$ with Boyd indices
     $\alpha _{\sigma}$ and $\beta _{\sigma}$ then it is possible
      to find for any $\varepsilon >0$
     a sequence $\tau$ which is equivalent to $\sigma$ with
      $d_0(\tau )=2^{\beta _{\sigma}-\varepsilon}$
     and $d_1(\tau )=2^{\alpha _{\sigma}+\varepsilon}$, i.e.
     \begin{equation}
             2^{\beta _{\sigma}-\varepsilon}\, \tau _j
     \le
        \tau _{j+1}
     \le  2^{\alpha _{\sigma}+\varepsilon} \tau _j
         \quad\mbox{for any}\quad j\in\No .
     \end{equation}}
\end{rem}

\begin{assump}
\label{assump}

 From now on we will denote
 $N=(N_j)_{j\in\No}$  a sequence of real positive numbers
 such that  there exist
 two numbers $1<\lambda _0\le \lambda _1$ with
 \begin{equation}
 \label{N}
        \lambda _{0}\, N_j \le N_{j+1}\le \lambda _1 N_j \quad
 \mbox{for any}\quad j\in \No .
 \end{equation}
 \end{assump}
 
 In particular $N$ is a so-called strongly increasing sequence
 --- compare Definition~2.2.1 and Remark~4.1.2 in \cite{FaLe06}.
We would like to point out that the
 condition $\lambda_0>1$ played a key role in \cite[Assumption 4.1.1]{FaLe06} in
 order to get atomic decompositions
 in function spaces of generalized smoothness.

Moreover we choose a natural number $\kappa_0$ in such a way that
$ 2 \le \lambda_0^{\kappa_0}$
and consequently
$ 2 N_j \le N_{k} ~~\mbox{for any } ~j,k \in \N_0 $ such that $j+\kappa_0 \le k$
holds. We will fix such a $\kappa_0$ in the following.


 \begin{defn}
 \label{def-decomp}
 For a fixed sequence
 $N=(N_j)_{j\in\N_0}$ as in Assumption \ref{assump},
 let $\Phi ^{N}$ be the collection of all
 function systems $\varphi^{N}=(\varphi_j^{N})_{j\in\No}$
 such that:\\[2mm]
{\bf (i)} 
   $\varphi_j^{N}\in C_0^{\infty}(\R^n ) \quad\mbox{and}\quad
                  \varphi _j^{N}(\xi )\ge 0 \quad\mbox{if}\quad \xi\in\R^n
                 \quad\mbox{for any}\quad j\in\No$ ;\\[3mm]                 
{\bf (ii)}
  \hspace*{5mm}
   $\supp \varphi _j^{N} \subset \{\xi\in\R^n \, :\, |\xi |\le  N_{j+\kappa_0}\},~~~~~~~j = 0, 1, . . ., \kappa_{0}-1$,\\[2mm] 
 \hspace*{5mm}
     $~\qquad \supp \varphi _j^{N} \subset \{\xi\in\R^n \, :\, N_{j-\kappa_0}\le |\xi |\le N_{j+\kappa_0}\}
   \quad\mbox{if}\quad j \ge \kappa_0$;\\[3mm]   
{\bf (iii)} ~~ for any $\gamma\in\N_0^n$ there exists a constant
$c_{\gamma}>0$
        such that for any $j\in\N_0$
       \begin{equation*}
           \label{fl-i3}
          | D^{\gamma}\varphi _j^{N}(\xi )|
 \le c_{\gamma}\, 
 (1+|\xi|^2)^{-|\gamma |/2}
         \quad\mbox{for any}\quad \xi\in\R^n ;
        \end{equation*}
 {\bf (iv)}~~~there exists a constant $c_{\varphi}>0$ such that
 \begin{equation*}
 \label{fl-i4}
            0< \sum\limits _{j=0}^{\infty}\varphi _j^{N}(\xi )
 =c_{\varphi}<\infty
             \quad\mbox{for any}\quad \xi\in\R^n   .
 \end{equation*}
 \end{defn}




 In what follows
 $\cal S $ stands for the Schwartz space of all complex-valued
 rapidly decreasing
 infinitely differentiable functions on $\mathbb{R}^n$ equipped with the usual
 topology, $\cal S '$ denotes its topological dual, the space
 of all tempered distributions on $\R^n$, and ${\cal F}$ and ${\cal F}^{-1}$ stand respectively for the Fourier transformation and its inverse.

Let $(\sigma_j)_{j\in\N_0}$ be an admissible
 sequence, $(N_j)_{j\in\N_0}$ be an admissible sequence satisfying
Assumption \ref{assump} and let $\varphi ^{N}\in\Phi ^{N}$.

\begin{defn}
            \label{defbf2}
  \vspace*{0.6cm}          {\bf (i)} Let $0<p\le \infty$ and $0 <q\le\infty$.
 The Besov space $B^{\sigma ,N}_{p,q}$ of generalized smoothness is defined as

 $$
 \left\{
 f\in {\cal S '}\,:\,
  \Big\|f\,|\, B^{\sigma ,N}_{p,q} \Big\| :=
    \Big(\sum\limits _{j=0}^{\infty}
            \sigma_j^q \, \|{\cal F}^{-1}\, (\varphi_j^N {\cal F}f)|L_p(\R^n)\|^q\,
    \Big)^{1/q} <\infty\right\}.
 $$

 {\bf (ii)}
             Let $0<p< \infty$ and $0 < q\le \infty$.
 The Triebel - Lizorkin space $F^{\sigma , N}_{p,q}$ of generalized smoothness is defined as

 $$
 \Big\{
 f\in{ \cal S '}\,:\,
 \|f\,|\, F^{\sigma ,N}_{p,q}\| :=
    \Big\|
    \Big(\sum\limits _{j=0}^{\infty}
            \sigma_j^q \, |{\cal F}^{-1}\, (\varphi_j^N {\cal F}f)(\cdot)|^q\,
    \Big)^{1/q}
   |\, L_p(\mathbb{R}^n) \Big\|
 <\infty\Big\}.
 $$
 In both cases one should use the usual modification when $q=\infty$.
 \end{defn}

 Both  $B^{\sigma ,N}_{p,q}$ and $F^{\sigma , N}_{p,q}$
 are Banach spaces which are independent of the choice of the
 system $(\varphi ^{N} )_{j\in \N_0}$, in the sense of equivalent quasi-norms.
 As in the classical case,
the embeddings
 ${\cal S} \hookrightarrow B^{\sigma , N}_{p,q}\hookrightarrow{\cal S '}$ and
 ${\cal S} \hookrightarrow F^{\sigma ,N}_{p,q}  \hookrightarrow{\cal S '}$ hold true for all admissible
 values of the parameters and sequences.
If $ p,q<\infty$ then ${\cal S}$
 is dense in $B^{\sigma , N}_{p,q}$ and in $F^{\sigma ,N}_{p,q}$.
Moreover, it is clear that $B^{\sigma ,N}_{p,p}=F^{\sigma ,N}_{p,p}$.

Note also that if $N_j=2^j$ and $\sigma = \sigma ^s := (2^{js})_{j\in\No}$
 with $s$ real,
 then
 the above spaces coincide with the usual function spaces $B^s_{p,q}$
 and $F^s_{p,q}$ on $\R^n$,  respectively.
  We shall use the simpler notation $B^s_{p,q}$ and $F^s_{p,q}$ in the more classical situation just mentioned.
  Even for general admissible $\sigma$, when $N_j=2^j$ we shall write simply $F^\sigma_{p,q}$ and $B^\sigma_{p,q}$ instead of $F^{\sigma ,N}_{p,q}$ and $B^{\sigma ,N}_{p,q}$, respectively.

We have the following relation between $B$ and $F$ spaces, the proof of which can be done similarly as in the classical case (cf. \cite[Prop. 2.3.2/2.(iii), p. 47]{Tri83}:

\begin{prop}\label{BF}
Let $0<p< \infty$, $0 < q \leq \infty$. Let $N$ and $\sigma$ be admissible sequences with $N$ satisfying also Assumption \ref{assump}. Then
$$
B^{\sigma,N}_{p,\min\{ p,q \}} \hookrightarrow F^{\sigma,N}_{p,q} \hookrightarrow B^{\sigma,N}_{p,\max\{ p,q \}}.
$$
\end{prop}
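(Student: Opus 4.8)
The plan is to reduce both embeddings to the interplay between the two orders in which one can combine an $L_p$-(quasi-)norm and a weighted $\ell_q$-(quasi-)norm of the building blocks $f_j := {\cal F}^{-1}(\varphi_j^N {\cal F}f)$. Writing things out, $\|f\,|\,B^{\sigma,N}_{p,r}\|$ is the weighted $\ell_r$-norm (in $j$) of the numbers $\|f_j\,|\,L_p\|$ — i.e.\ first $L_p$ in $x$, then $\ell_r$ in $j$ — whereas $\|f\,|\,F^{\sigma,N}_{p,q}\|$ is the $L_p$-norm (in $x$) of the weighted $\ell_q$-norm (in $j$) of the values $|f_j(x)|$ — i.e.\ first $\ell_q$, then $L_p$. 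Since the weights $\sigma_j$ are scalars they can be absorbed into $f_j$, so everything comes down to comparing $\ell_r(L_p)$ and $L_p(\ell_q)$ mixed norms of the sequence $(\sigma_j f_j)_j$. Because both spaces are independent of the choice of $\varphi^N\in\Phi^N$ up to equivalence of quasi-norms (as recalled above), it suffices to work with one fixed system $\varphi^N$ used simultaneously for the $B$- and the $F$-norm.

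The three ingredients I would isolate (or simply quote) are: (a) monotonicity of the $\ell_r$-quasi-norms, $\|\cdot\,|\,\ell_{r_2}\| \le \|\cdot\,|\,\ell_{r_1}\|$ whenever $0 < r_1 \le r_2 \le \infty$; (b) the Fubini identity $\ell_p(L_p) = L_p(\ell_p)$, valid for all $0<p<\infty$; and (c) the generalized Minkowski inequality in its two guises — for $0<q\le p<\infty$ one has $\|(g_j)\,|\,L_p(\ell_q)\| \le \|(g_j)\,|\,\ell_q(L_p)\|$, proved by raising to the power $q$ and using the triangle inequality in $L_{p/q}$ (where $p/q\ge 1$), and for $0<p\le q\le\infty$ the reverse estimate $\|(g_j)\,|\,\ell_q(L_p)\| \le \|(g_j)\,|\,L_p(\ell_q)\|$, proved by raising to the power $p$ and applying Minkowski's integral inequality for the exponent $q/p\ge 1$ on the product of counting and Lebesgue measures.

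With these in hand, the left embedding $B^{\sigma,N}_{p,\min\{p,q\}} \hookrightarrow F^{\sigma,N}_{p,q}$ splits into two cases. If $q \le p$, then $\min\{p,q\}=q$ and the claim $L_p(\ell_q) \le \ell_q(L_p)$ is exactly the first form of (c). If $p \le q$, then $\min\{p,q\}=p$ and I combine (a) with (b): $L_p(\ell_q) \le L_p(\ell_p) = \ell_p(L_p)$. Symmetrically, the right embedding $F^{\sigma,N}_{p,q} \hookrightarrow B^{\sigma,N}_{p,\max\{p,q\}}$ splits as follows. If $p \le q$, then $\max\{p,q\}=q$ and $\ell_q(L_p) \le L_p(\ell_q)$ is the second form of (c). If $q \le p$, then $\max\{p,q\}=p$ and again (b) together with (a) give $\ell_p(L_p) = L_p(\ell_p) \le L_p(\ell_q)$. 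Applying each estimate to $(\sigma_j f_j)_j$ yields the four required inequalities, whence the two continuous embeddings.

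There is no serious obstacle here: the argument is genuinely the same as the classical one in \cite[Prop. 2.3.2/2.(iii)]{Tri83}, the admissibility of $\sigma$ and $N$ and Assumption \ref{assump} playing no role beyond guaranteeing that the spaces are well defined. The only points demanding a little care are the bookkeeping of the $q=\infty$ case (where the $\ell_q$-quasi-norm is read as a supremum and both Minkowski-type inequalities degenerate to the trivial monotonicity statement) and keeping track of the fact that the inequalities in (c) require the relevant quotient exponent to be at least $1$, which holds precisely in the cases where I invoke them.
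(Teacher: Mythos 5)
Your argument is correct and is exactly the route the paper intends: the paper gives no written proof but refers to the classical case \cite[Prop. 2.3.2/2.(iii)]{Tri83}, whose proof is precisely this comparison of the mixed quasi-norms $L_p(\ell_q)$ and $\ell_r(L_p)$ via monotonicity of the $\ell_r$-scales, the Fubini identity for $r=p$, and the two forms of the generalized Minkowski inequality. Your case split and the handling of $q=\infty$ are both accurate, so nothing further is needed.
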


\bigskip

Of intrinsic interest are also embedding results involving such spaces. Here we present two which will, moreover, be of great service to us later on. In the case of Besov spaces, this is taken from \cite[The. 3.7]{CaFa04}:

\begin{prop}
\label{CF-t1}
Let $N=(N_j)_{j\in\No}$ be an admissible sequence as
in Assumption~\ref{assump}
 and let $\sigma = (\sigma _j)_{j\in\No}$
and $\tau =(\tau _j)_{j\in\No}$ be two further admissible sequences.
Let $0<p_1\leq p_2\le \infty$, $0 < q_1, q_2 \leq \infty$ and
 $\frac{1}{q^*} := \left( \frac{1}{q_2} - \frac{1}{q_1} \right)_+$.
If
\begin{equation}
\label{CF-eq3}
              \left (
          \sigma _j^{-1}\, \tau _j\, N_j^{n\left (\frac{1}{p_1}-\frac{1}{p_2}\right )}
              \right )_{j\in\No}\in \ell _{q^*}
\end{equation}
then             $\; \; B^{\sigma , N}_{p_1,q_1}\hookrightarrow B^{\tau , N}_{p_2,q_2}$.
\end{prop}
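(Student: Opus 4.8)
The plan is to reduce everything to a single admissible system $\varphi^N\in\Phi^N$ and then combine two ingredients: a Plancherel--Polya--Nikolskii inequality that upgrades integrability from $p_1$ to $p_2$ at the cost of a factor $N_j^{n(1/p_1-1/p_2)}$ on each frequency block, and H\"older's inequality in the sequence variable $j$. First I would use the fact, recalled after Definition~\ref{defbf2}, that both Besov norms are independent of the chosen system up to equivalent quasi-norms; so I fix one $\varphi^N$ and use it to compute both $\|\cdot\,|B^{\sigma,N}_{p_1,q_1}\|$ and $\|\cdot\,|B^{\tau,N}_{p_2,q_2}\|$. Writing $f_j:={\cal F}^{-1}(\varphi_j^N{\cal F}f)$, the key structural point is that by Definition~\ref{def-decomp}(ii) each $f_j$ is band-limited with $\supp {\cal F}f_j\subset\{\xi:|\xi|\le N_{j+\kappa_0}\}$, and by Assumption~\ref{assump} one has $N_{j+\kappa_0}\le\lambda_1^{\kappa_0}N_j$, so the relevant frequency radius is comparable to $N_j$.

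Next I would invoke the Nikolskii inequality for entire functions of exponential type: for a tempered distribution $g$ with $\supp {\cal F}g\subset\{\xi:|\xi|\le R\}$ and $0<p_1\le p_2\le\infty$, one has $\|g\,|L_{p_2}\|\le c\,R^{n(1/p_1-1/p_2)}\|g\,|L_{p_1}\|$ with $c$ independent of $R$ and $g$. Applying this to $g=f_j$ with $R=N_{j+\kappa_0}$ and absorbing the harmless factor $(\lambda_1^{\kappa_0})^{n(1/p_1-1/p_2)}$ (here $1/p_1-1/p_2\ge 0$ is used), I obtain
$$\tau_j\,\|f_j\,|L_{p_2}\|\le c\,\big(\sigma_j^{-1}\tau_j N_j^{n(1/p_1-1/p_2)}\big)\,\sigma_j\,\|f_j\,|L_{p_1}\|$$
for every $j\in\No$. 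Setting $a_j:=\sigma_j^{-1}\tau_j N_j^{n(1/p_1-1/p_2)}$, which lies in $\ell_{q^*}$ by \eqref{CF-eq3}, and $b_j:=\sigma_j\|f_j\,|L_{p_1}\|$, so that $\|(b_j)_{j}\,|\ell_{q_1}\|=\|f\,|B^{\sigma,N}_{p_1,q_1}\|$, the matter reduces to the sequence estimate $\|(a_jb_j)_{j}\,|\ell_{q_2}\|\le\|(a_j)_{j}\,|\ell_{q^*}\|\,\|(b_j)_{j}\,|\ell_{q_1}\|$.

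Finally I would carry out this last estimate by H\"older's inequality for sequence spaces, distinguishing the two cases encoded in the definition of $q^*$. When $q_1\le q_2$ one has $q^*=\infty$, and the bound follows from $\ell_{q_1}\hookrightarrow\ell_{q_2}$ together with pulling $\sup_j a_j$ out of the $\ell_{q_2}$-norm. When $q_1>q_2$ one has $1/q_2=1/q^*+1/q_1$, which is exactly the H\"older exponent relation, so H\"older with exponents $q^*$ and $q_1$ gives the claim directly. Combining the two cases yields $\|f\,|B^{\tau,N}_{p_2,q_2}\|\le c\,\|(a_j)_{j}\,|\ell_{q^*}\|\,\|f\,|B^{\sigma,N}_{p_1,q_1}\|$, which is the asserted embedding. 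The one genuinely non-trivial ingredient --- the main obstacle --- is the uniform Nikolskii inequality; everything else is bookkeeping. In the generalized-smoothness setting the essential point is that its constant depends only on $p_1,p_2,n$ and not on $j$, which is precisely where the strong increase of $N$ (Assumption~\ref{assump}) and the comparability $N_{j+\kappa_0}\sim N_j$ enter.
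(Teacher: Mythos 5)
Your argument is correct. Note that the paper itself does not prove this proposition but imports it from \cite[The.\ 3.7]{CaFa04}; your route --- fixing one admissible system, applying the Plancherel--Polya--Nikolskii inequality on each band $\supp{\cal F}f_j\subset\{|\xi|\le N_{j+\kappa_0}\}$ with $N_{j+\kappa_0}\le\lambda_1^{\kappa_0}N_j$, and then using H\"older (respectively the monotonicity $\ell_{q_1}\hookrightarrow\ell_{q_2}$ when $q^*=\infty$) in the index $j$ --- is precisely the standard proof and is essentially the one given in that reference. The only ingredient you take as known, the uniformity of the Nikolskii constant in $R$, is indeed the classical scaling-invariant form of that inequality, so there is no gap.
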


\bigskip


The following partial counterpart for the $F$-spaces (which will be enough for our purposes) can be proved similarly (cf. also \cite[Prop. 1.1.13.(iv),(vi)]{Mou01b}):

\begin{prop}
\label{Fcounterpart}
Let $N$ be an admissible sequence as in Assumption~\ref{assump} and let $\sigma$
and $\tau$ be two further admissible sequences. Let $0<p<\infty$, $0<q_1,q_2\leq \infty$ and  $\frac{1}{q^*} := \left( \frac{1}{q_2} - \frac{1}{q_1} \right)_+$.
If
\begin{equation}
\label{CF-eq3'}
              \left (
          \sigma _j^{-1}\, \tau _j\, \right )_{j\in\No}\in \ell _{q^*}
\end{equation}
then             $\; \; F^{\sigma , N}_{p,q_1}\hookrightarrow F^{\tau , N}_{p,q_2}$.
\end{prop}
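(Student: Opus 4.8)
The plan is to reduce the embedding to a pointwise (in the space variable) weighted sequence-space inequality and then to integrate. Write $f_j := \mathcal{F}^{-1}(\varphi_j^N \mathcal{F}f)$ for the frequency building blocks. Since the integrability parameter $p$, the sequence $N$ and the admissible system $\varphi^N \in \Phi^N$ are common to both spaces, these building blocks are literally the same in the definitions of $\|f \mid F^{\sigma,N}_{p,q_1}\|$ and $\|f \mid F^{\tau,N}_{p,q_2}\|$. Consequently, it suffices to establish, for a.e. $x \in \Rn$,
$$
\Big( \sum_{j=0}^\infty \tau_j^{q_2} |f_j(x)|^{q_2} \Big)^{1/q_2} \le c \, \Big( \sum_{j=0}^\infty \sigma_j^{q_1} |f_j(x)|^{q_1} \Big)^{1/q_1},
$$
with $c$ independent of $x$ and $f$; taking $L_p$-(quasi-)norms of both sides and using the monotonicity of the $L_p$-functional then yields the claimed embedding. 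This is precisely the point where keeping $p$ fixed is essential: in the $F$-setting the $L_p$-norm sits \emph{outside} the $\ell_q$-sum, so --- unlike in the $B$-case of Proposition~\ref{CF-t1} --- there is no room to trade integrability between frequency bands, which is why no factor $N_j^{n(\frac{1}{p_1}-\frac{1}{p_2})}$ appears in (\ref{CF-eq3'}) and why only the case $p_1=p_2$ is treated.

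It then remains to prove the abstract weighted estimate: for nonnegative sequences $(a_j)_j$ and the weight $w_j := \tau_j/\sigma_j$,
$$
\| (w_j a_j)_j \mid \ell_{q_2} \| \le \| (w_j)_j \mid \ell_{q^*} \| \; \| (a_j)_j \mid \ell_{q_1} \|,
$$
applied with $a_j = \sigma_j |f_j(x)|$ (so that $w_j a_j = \tau_j |f_j(x)|$), the finiteness of $\|(w_j)_j \mid \ell_{q^*}\|$ being guaranteed by hypothesis (\ref{CF-eq3'}). I would split into two cases according to the sign of $\frac{1}{q_2} - \frac{1}{q_1}$. If $q_1 \le q_2$, then $q^* = \infty$, $\|(w_j)_j \mid \ell_\infty\| = \sup_j w_j < \infty$, and the estimate follows from the elementary nesting $\|\cdot \mid \ell_{q_2}\| \le \|\cdot \mid \ell_{q_1}\|$ together with $w_j \le \sup_k w_k$. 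If $q_1 > q_2$, then $q^* = q_1 q_2/(q_1-q_2)$ is finite and the estimate is a direct application of Hölder's inequality to $\sum_j w_j^{q_2} a_j^{q_2}$ with the exponents $q_1/q_2$ and its conjugate $q_1/(q_1-q_2)$, the latter producing exactly $w_j^{q_2 \cdot q_1/(q_1-q_2)} = w_j^{q^*}$.

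Finally I would check the endpoint conventions $q_1 = \infty$ and/or $q_2 = \infty$, where the $\ell_{q_i}$-(quasi-)norms are read as suprema; these fall under the two cases above ($q_1 = \infty$ gives $q^* = q_2$ and is covered by the Hölder computation, while $q_2 = \infty$ gives $q^* = \infty$ and is covered by the nesting argument). The whole argument is routine once the reduction to the pointwise inequality is in place; the only mildly delicate bookkeeping is the identification of the conjugate exponent in the Hölder step with $q^*$, together with the correct reading of the $q=\infty$ modifications. No genuine obstacle is expected, in keeping with the remark in the text that the result can be proved similarly to the Besov case.
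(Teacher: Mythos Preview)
Your proposal is correct and is exactly the standard argument the paper has in mind when it says the result ``can be proved similarly'' to Proposition~\ref{CF-t1} (cf.\ also \cite[Prop.~1.1.13]{Mou01b}): with $p$ fixed, the embedding reduces to the pointwise weighted $\ell_{q_1}\to\ell_{q_2}$ estimate, which is precisely H\"older's inequality (or the trivial nesting $\ell_{q_1}\subset\ell_{q_2}$ when $q_1\le q_2$), with the exponent bookkeeping you describe.
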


\bigskip

We state now sufficient conditions, already known to us, in order that $B^{\sigma , N}_{p,q}$ and $F^{\sigma , N}_{p,q}$ contain only regular distributions.

\begin{prop}[{\cite[Corol 3.18]{CaFa04}}]
\label{BinLpbar}
Let $0<p\leq \infty$, $0 < q \leq \infty$. Let $N$ and $\sigma$ be admissible sequences with $N$ satisfying also Assumption \ref{assump}. If
    \[  \label{CF-eq2}
          \left (
                   \sigma _j^{-1}\, N_j^{n\left (\frac{1}{p}-1\right )_{+}}
           \right )_{j\in\No}  \in \ell _{q^{\prime}}
\]
then   $\; B^{\sigma ,N}_{p,q}\hookrightarrow L_{\max\{1,p\}}$.
\end{prop}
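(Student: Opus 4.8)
The plan is to split the argument into the two easy integrability regimes and reduce everything to already-available embeddings. I would first observe that the target norm $\|\cdot\,|\,L_{\max\{1,p\}}\|$ is exactly the norm of a Besov or Lebesgue-type space that we can reach from $B^{\sigma,N}_{p,q}$ by Proposition \ref{CF-t1}. Concretely, when $p\ge 1$ the exponent $\left(\tfrac1p-1\right)_+$ vanishes, the hypothesis reads $(\sigma_j^{-1})_{j}\in\ell_{q'}$, and $\max\{1,p\}=p$; when $0<p<1$ we have $\max\{1,p\}=1$, so the target is $L_1$ and the frequency factor $N_j^{n(1/p-1)}$ genuinely enters. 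In both cases the strategy is the same: identify $L_{\max\{1,p\}}$ with (or sandwich it between) a Besov space of a suitable trivial smoothness sequence, then invoke the embedding criterion \eqref{CF-eq3}.

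For the step identifying the target, I would use the classical fact that for $1\le r<\infty$ one has the continuous embedding $B^{\sigma^0,N}_{r,\min\{r,2\}}\hookrightarrow L_r$, where $\sigma^0=(1)_{j}$ is the constant smoothness sequence (this is the generalized-smoothness analogue of the classical $F^0_{r,2}=L_r$ together with Proposition \ref{BF}); for $r=1$ one can alternatively use the cruder but sufficient $B^{\sigma^0,N}_{1,1}\hookrightarrow L_1$, which follows because each Fourier block is bounded in $L_1$ and the $\ell_1$-summation of the blocks controls the $L_1$-norm. Having fixed the target as such a Besov space, I would apply Proposition \ref{CF-t1} with $p_1=p$, $p_2=\max\{1,p\}$, $\tau=\sigma^0$, and $q_1=q$, $q_2=\min\{\max\{1,p\},2\}$ (or $q_2=1$ in the $L_1$ route). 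The embedding condition \eqref{CF-eq3} then becomes precisely
$$
\left(\sigma_j^{-1}\,N_j^{n\left(\frac1p-\frac{1}{\max\{1,p\}}\right)}\right)_{j}\in\ell_{q^*},
$$
and one checks that $n\left(\tfrac1p-\tfrac{1}{\max\{1,p\}}\right)=n\left(\tfrac1p-1\right)_+$ and that the relevant $q^*$ is no larger than the $q'$ appearing in the hypothesis, so that $\ell_{q'}\hookrightarrow\ell_{q^*}$ makes the stated condition imply \eqref{CF-eq3}. This is the heart of the matter.

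The main obstacle I anticipate is bookkeeping the summation exponent: I must verify that the relation between $q'$, $q^*=\left(\tfrac{1}{q_2}-\tfrac1q\right)_+$ and the chosen $q_2$ really does yield $\ell_{q'}\subseteq\ell_{q^*}$ uniformly across the sub-cases $p<1$, $1\le p\le 2$ and $p>2$, since $q_2$ is a minimum that switches between $p$-dependent values and $2$. A clean way around this is to note that $\ell_{q'}\hookrightarrow\ell_{q^*}$ whenever $q^*\ge q'$, i.e. whenever $q_2\le\min\{p,2\}$-type bounds hold, and then simply choose $q_2$ small enough (e.g. $q_2=\min\{\max\{1,p\},2\}$, or even $q_2=1$) so that the embedding into $L_{\max\{1,p\}}$ still holds while $q^*$ stays as large as needed; since the hypothesis already puts the sequence in $\ell_{q'}$, any larger $\ell_{q^*}$ follows for free. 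Once this exponent comparison is settled, the conclusion is immediate from Proposition \ref{CF-t1} composed with the target identification.
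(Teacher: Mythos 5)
Your argument is correct. Note that the paper itself does not prove this proposition --- it imports it from \cite[Corol.~3.18]{CaFa04} --- but your derivation is a valid self-contained one, and it runs along exactly the chains the paper itself uses later (in the proof of Corollary \ref{corollary}): Proposition \ref{CF-t1} with $\tau=\sigma^0$, then Corollary \ref{level zero} to identify $B^{(1),N}_{\max\{1,p\},q_2}=B^0_{\max\{1,p\},q_2}$, then a classical Littlewood--Paley embedding into $L_{\max\{1,p\}}$. One remark on the bookkeeping: the clause ``$q^*\ge q'$, i.e.\ whenever $q_2\le\min\{p,2\}$-type bounds hold'' conflates the two constraints --- $q^*\ge q'$ is equivalent to $q_2\ge 1$ (for $q\ge1$), while $q_2\le\min\{\max\{1,p\},2\}$ is what the target embedding needs --- but your concrete choices satisfy both, and in fact $q_2=1$ makes $q^*=q'$ exactly and handles every case uniformly, including $p=\infty$ via $B^0_{\infty,1}\hookrightarrow L_\infty$ (where $q_2=2$ would fail).
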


\begin{rem}
\label{BinL1loc}
{\em As an immediate consequence we get, with the hypotheses of the previous proposition, also the conclusion $\; B^{\sigma ,N}_{p,q}\subset L_1^{\rm loc}$.}
\end{rem}

\begin{prop}[{\cite[Sec. 4, Prop. 3]{CaLe05}}]
\label{sufcond1}
Let $0<p< \infty$, $0 < q \leq \infty$. Let $N$ and $\sigma$ be admissible sequences with $N$ satisfying also Assumption \ref{assump}. If
    \[\label{cond1}
        \left\{
            \begin{array}{lr}
                (\sigma_j^{-1} N_j^\delta)_{j \in \No} \in \ell_{p'},\; \mbox{ for some }\, \delta >0, & \mbox{ if }\, 1 \leq p < \infty \\
                (\sigma_j^{-1} N_j^{n(\frac{1}{p}-1)})_{j \in \No} \in \ell_{\infty}, & \mbox{ if }\, 0 < p < 1\,,
            \end{array}
            \right.
\]
then
    $\; F^{\sigma,N}_{p,q} \subset L_1^{{\rm loc}}$.
\end{prop}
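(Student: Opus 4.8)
The plan is to derive both parameter regimes, $1\le p<\infty$ and $0<p<1$, from continuous embeddings into the Lebesgue space $L_{\max\{1,p\}}$: indeed, exactly as in Remark~\ref{BinL1loc}, any embedding $F^{\sigma,N}_{p,q}\hookrightarrow L_{\max\{1,p\}}$ forces $F^{\sigma,N}_{p,q}\subset L_1^{\rm loc}$. Throughout I will use that Assumption~\ref{assump} makes $N$ grow geometrically, so that $(N_j^{-\delta})_{j\in\No}\in\ell_r$ for every $\delta>0$ and every $r\in(0,\infty]$.

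For $1\le p<\infty$ the argument is short. Here $\max\{1,p\}=p$ and $\left(\frac1p-1\right)_+=0$. By Proposition~\ref{BF} one has $F^{\sigma,N}_{p,q}\hookrightarrow B^{\sigma,N}_{p,\max\{p,q\}}$, so it suffices to place the latter inside $L_p$ via Proposition~\ref{BinLpbar}, whose hypothesis in this range reads $(\sigma_j^{-1})_{j\in\No}\in\ell_{(\max\{p,q\})'}$. The assumption $(\sigma_j^{-1}N_j^\delta)_{j\in\No}\in\ell_{p'}\subset\ell_\infty$ yields $\sigma_j^{-1}\le c\,N_j^{-\delta}$, and the geometric decay of $N_j^{-\delta}$ then puts $(\sigma_j^{-1})_{j\in\No}$ into \emph{every} $\ell_r$, in particular into $\ell_{(\max\{p,q\})'}$. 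Hence $F^{\sigma,N}_{p,q}\hookrightarrow L_p\subset L_1^{\rm loc}$.

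For $0<p<1$ we are at the endpoint $\max\{1,p\}=1$, and the condition $(\sigma_j^{-1}N_j^{n(1/p-1)})_{j\in\No}\in\ell_\infty$ is exactly critical. A first attempt is to decompose $f=\sum_j f_j$ with $f_j:={\cal F}^{-1}(\varphi_j^{N}{\cal F}f)$ band-limited to $\{|\xi|\le N_{j+\kappa_0}\}$, and to combine the Nikolskii inequality $\|f_j\,|\,L_1\|\lesssim N_j^{n(1/p-1)}\|f_j\,|\,L_p\|$ with the critical hypothesis to get $\|f\,|\,L_1\|\lesssim\sum_j\sigma_j\|f_j\,|\,L_p\|$. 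This, however, only controls the $B^{\sigma,N}_{p,1}$-norm, so through Proposition~\ref{BF} it settles just the sub-range $q\le1$; for larger $q$ the triangle inequality in $L_1$ discards the fine $\ell_q$-summability internal to the $F$-norm and is then insufficient. The route I would take instead is to exploit that structure by establishing the sharp Triebel--Lizorkin Sobolev embedding with free fine index,
\[
F^{\sigma,N}_{p,q}\hookrightarrow F^{\widetilde\sigma,N}_{1,2},\qquad \widetilde\sigma_j:=\sigma_j\,N_j^{-n(1/p-1)},
\]
valid for \emph{every} $q$ because both sides share the same differential dimension while the integrability is lowered from $p$ to $1$; note $\widetilde\sigma$ is bounded below precisely by the critical hypothesis. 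Since $\widetilde\sigma$ is bounded below, $F^{\widetilde\sigma,N}_{1,2}$ is a local-Hardy-type space and embeds continuously into $L_1$, whence $F^{\sigma,N}_{p,q}\hookrightarrow L_1\subset L_1^{\rm loc}$.

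The main obstacle is exactly this $0<p<1$ endpoint. The displayed embedding does \emph{not} follow from Propositions~\ref{CF-t1} and \ref{BF}, which pass through Besov spaces and thereby lose the free fine index: at the critical line the Besov Sobolev step would require $\max\{p,q\}\le1$, which fails for $q>1$. It must therefore be proved genuinely within the $F$-scale, via the atomic decomposition granted by Assumption~\ref{assump} (cf.\ \cite{FaLe06}) together with a Hardy--Littlewood maximal inequality in $L_p(\ell_q)$ — equivalently, via the corresponding sharp sequence-space inequality for the atomic coefficients. This is the one step that cannot be reduced to the embedding machinery recalled above.
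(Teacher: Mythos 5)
Your case $1\le p<\infty$ is complete and correct: the hypothesis forces $\sigma_j^{-1}\lesssim N_j^{-\delta}$, which decays geometrically by Assumption \ref{assump}, so $(\sigma_j^{-1})_j$ lies in every $\ell_r$ and Proposition \ref{BF} followed by Proposition \ref{BinLpbar} gives $F^{\sigma,N}_{p,q}\hookrightarrow B^{\sigma,N}_{p,\max\{p,q\}}\hookrightarrow L_p$. (The paper itself does not reprove the proposition --- it is imported from \cite{CaLe05} --- but the same chain of embeddings is what it uses elsewhere.)

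The problem is your case $0<p<1$. There the entire argument rests on the embedding $F^{\sigma,N}_{p,q}\hookrightarrow F^{\widetilde\sigma,N}_{1,2}$ with $\widetilde\sigma_j=\sigma_jN_j^{-n(1/p-1)}$, which you assert but do not prove, and you explicitly declare that it ``cannot be reduced to the embedding machinery recalled above'' and would require a fresh atomic/maximal-function argument. As written this is a genuine gap --- the one nontrivial step of the proof is left as a claim. Moreover, the declaration is false: Theorem \ref{BFB} (which is precisely Lemma~1 of \cite{CaLe05} and is part of the recalled machinery) gives, for \emph{every} $0<q\le\infty$, the sharp Jawerth--Franke-type embedding $F^{\sigma,N}_{p,q}\hookrightarrow B^{\sigma'',N}_{1,v}$ with $\sigma''_j=\sigma_jN_j^{n(1-\frac1p)}=\widetilde\sigma_j$, valid whenever $p\le v\le\infty$; since $p<1$ you may take $v=1$. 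The hypothesis $(\sigma_j^{-1}N_j^{n(\frac1p-1)})_j\in\ell_\infty$ says exactly that $((\sigma''_j)^{-1})_j\in\ell_\infty=\ell_{1'}$, so Proposition \ref{BinLpbar} (with $p=q=1$, where $n(\frac11-1)_+=0$) yields $B^{\sigma'',N}_{1,1}\hookrightarrow L_1\subset L_1^{\rm loc}$. This is exactly the route the paper takes in the proof of Corollary \ref{corollary}(iii) for $0<p<1$. So the architecture you chose is the right one, but you should replace the unproved $F$-target embedding by the $B$-target embedding of Theorem \ref{BFB}, after which no new atomic decomposition or vector-valued maximal inequality is needed.
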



\section{Preparatory results}


In order to deal with the main question formulated in this paper, we need to introduce some technical tools and derive some results which will be required later on.

\subsection{Standardization}

In our setting, standardization is the ability to identify our generalized spaces with spaces where $N_j$ has the classical form $2^j$.

Let $N$ and $\sigma$ be admissible sequences, $N$ satisfying also the Assumption \ref{assump} as before, and let $\kappa_0$ be the fixed natural number with 
 $ \lambda_0^{\kappa_0} \ge 2$~.
Define
    \begin{equation} \label{beta}
    \beta_j := \sigma_{k(j)}, \quad \mbox{with }\; k(j):= \min \{ k \in \No : 2^{j-1} \leq N_{k+\kappa_0} \}, \;\; j \in \No .
\end{equation}
Then we have that
    \[
    \mu_0 \beta_j \leq \beta_{j+1} \leq \mu_1 \beta_j, \quad j \in \No,
\]
with $\mu_0 = \min \{ 1, d_0^{\kappa_0} \}$, $\mu_1 = \max \{ 1, d_1^{\kappa_0} \}$.

Under these conditions we proved
in \cite[Theorem 1]{CaLe05} the following standardization:

\begin{theo} \label{standard}
Let $N$ and $\sigma$ be admissible sequences, $N$ satisfying also the Assumption \ref{assump}. 
Let, further, $0 < p,q \leq \infty$ (with $p \neq \infty$ in the $F$-case). Then
    \[
    F^{\sigma,N}_{p,q} = F^\beta_{p,q} \quad and \quad  B^{\sigma,N}_{p,q} = B^\beta_{p,q},
\]
where $\beta := \seqo{\beta}{j}$ is determined by (\ref{beta}).

\end{theo}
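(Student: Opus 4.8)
The plan is to prove the two identities $F^{\sigma,N}_{p,q} = F^\beta_{p,q}$ and $B^{\sigma,N}_{p,q} = B^\beta_{p,q}$ by comparing the two defining dyadic/admissible decompositions. The core idea is that the sequence $\beta$ is engineered precisely so that the level $j$ of the \emph{classical} ($2^j$) frequency splitting, weighted by $\beta_j$, matches the level $k(j)$ of the \emph{$N$-adapted} splitting, weighted by $\sigma_{k(j)}$. So first I would set up two admissible decomposition systems: a system $\varphi^N = (\varphi^N_j)_{j\in\No} \in \Phi^N$ adapted to $N$ (annuli of radii $\sim N_j$), and a classical system $\psi = (\psi_j)_{j\in\No} \in \Phi^M$ with $M_j = 2^j$ (annuli of radii $\sim 2^j$). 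These exist by Definition~\ref{def-decomp} since both $N$ and $(2^j)_j$ satisfy Assumption~\ref{assump}. Since the spaces are independent of the admissible system chosen (in the sense of equivalent quasi-norms), it suffices to compare the quasi-norm built from $\psi$ and $\beta$ against the quasi-norm built from $\varphi^N$ and $\sigma$.

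The heart of the argument is the relationship between the index $j$ of the classical scale and the index $k(j)$ of the $N$-scale. By the definition of $k(j)$ in (\ref{beta}), the dyadic annulus at level $j$, $\{\xi : |\xi| \sim 2^j\}$, sits inside a \emph{bounded overlap} of $N$-annuli: because $\lambda_0^{\kappa_0}\ge 2$, consecutive values $2^{j-1}$ and $2^j$ force the indices $k(j)$ and $k(j+1)$ to move only a controlled number of steps, so each $N$-annulus $\{\xi : |\xi| \sim N_k\}$ meets only $O(1)$ dyadic annuli and vice versa. I would make this quantitative by showing that there is a fixed integer $L$ (depending only on $\lambda_0,\lambda_1,\kappa_0$) such that the Fourier supports satisfy $\supp\psi_j \subset \bigcup_{|k - k(j)|\le L}\{\xi:|\xi|\sim N_k\}$ and conversely each $\varphi^N_k$ is covered by boundedly many $\psi_j$. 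Combined with the admissibility estimate $\mu_0\beta_j \le \beta_{j+1}\le\mu_1\beta_j$ (already recorded after (\ref{beta})) and the analogous estimate for $\sigma$, this gives $\beta_j \sim \sigma_k$ whenever $|k-k(j)|\le L$, so the weights match up to uniform constants across the bounded reindexing.

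With this dictionary in hand, the estimate becomes a standard Nikol'skii--Plancherel--Polya / maximal-function comparison: I would write $\F^{-1}(\psi_j \F f)$ as a finite sum of terms $\F^{-1}(\psi_j\,\varphi^N_k\,\F f)$ over the $O(1)$ indices $k$ with overlapping support, estimate each $L_p$-norm (for $B$) or each pointwise $q$-sum inside $L_p$ (for $F$) by the corresponding $N$-building block, using the fact that convolving with a kernel whose Fourier transform is smooth and compactly supported on the relevant annulus is bounded on $L_p$ (and controllable by the Peetre maximal function in the $F$-case, to handle $p<1$). Summing over $j$ in $\ell_q$ and using the weight equivalence $\beta_j \sim \sigma_{k(j)}$ then yields one inequality of quasi-norms; exchanging the roles of $\psi$ and $\varphi^N$ gives the reverse. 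I expect the main obstacle to be the bookkeeping for the small indices $j < \kappa_0$, where Definition~\ref{def-decomp}(ii) only provides a ball rather than an annulus and where the map $k(j)$ may be eventually constant or not yet injective; there one must check separately that the finitely many low-frequency terms contribute equivalent quantities on both sides, which is routine but requires care to absorb into the uniform constants. The $F$-case additionally demands the vector-valued maximal inequality (to pass the $O(1)$-term splitting through the $L_p(\ell_q)$ norm for $p<1$ or $q<1$), but this is exactly the tool available from the atomic/maximal-function machinery underlying these spaces.
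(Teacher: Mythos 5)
The paper does not prove this theorem itself: it is quoted from \cite[Theorem 1]{CaLe05}. Your sketch --- regrouping the $N$-adapted decomposition against the dyadic one via the bounded-overlap correspondence $j\leftrightarrow k(j)$, matching the weights $\beta_j\sim\sigma_{k}$ through admissibility, and closing the estimate with compactly-supported Fourier multiplier bounds on $L_p$ (Peetre maximal functions and the vector-valued maximal inequality in the $F$-case) --- is essentially the standard argument used in that reference, so the approach is correct and not genuinely different.
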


As a consequence of this we obtain in case $\sigma_j = \sigma_j^0 = 1$ for all $j \in \N_0$:

\begin{cor} \label{level zero}Let $(\sigma_j)_{j\in\N_0}$ and $(N_j)_{j\in\N_0}$ be as before and $0 < p,q \leq \infty$ (with $p \neq \infty$ in the $F$-case).
Then
\begin{equation}
B^{(1),N}_{p,q} = B^{\sigma^0,N}_{p,q}= B^0_{p,q}~~
\end{equation}
and
\begin{equation}
 F^{(1),N}_{p,q} = F^{\sigma^0,N}_{p,q}= F^0_{p,q}~~.
 \end{equation}
\end{cor}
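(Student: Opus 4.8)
The plan is to derive Corollary \ref{level zero} as a direct specialization of the standardization Theorem \ref{standard} to the constant sequence $\sigma^0 = (1)_{j \in \No}$. First I would verify that $\sigma^0$ is indeed an admissible sequence: taking $d_0 = d_1 = 1$ in (\ref{sigma}) we have trivially $1 \cdot \sigma^0_j \le \sigma^0_{j+1} \le 1 \cdot \sigma^0_j$, so the admissibility constants exist. Hence Theorem \ref{standard} applies to the pair $(N, \sigma^0)$, and it remains only to identify the resulting sequence $\beta$ produced by (\ref{beta}) in this particular case.

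The key computation is that when $\sigma_j = 1$ for all $j$, the defining formula $\beta_j := \sigma_{k(j)}$ immediately gives $\beta_j = 1$ for every $j \in \No$, regardless of the value of the index $k(j) = \min\{k \in \No : 2^{j-1} \le N_{k+\kappa_0}\}$. That is, the whole (somewhat delicate) business of choosing $k(j)$ to compensate for the non-dyadic frequency splitting becomes irrelevant, since a constant smoothness sequence is insensitive to reindexing. Thus $\beta = \sigma^0 = (1)_{j \in \No}$, and Theorem \ref{standard} reads
    \[
    F^{\sigma^0,N}_{p,q} = F^{\sigma^0}_{p,q} \quad \text{and} \quad B^{\sigma^0,N}_{p,q} = B^{\sigma^0}_{p,q},
    \]
where on the right the underlying frequency sequence is now the classical $N_j = 2^j$. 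Since $\sigma^0_j = 1 = 2^{j \cdot 0}$, this is precisely the classical space with smoothness $s = 0$, i.e.\ $F^{\sigma^0}_{p,q} = F^0_{p,q}$ and $B^{\sigma^0}_{p,q} = B^0_{p,q}$ by the notational conventions already fixed in the excerpt.

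Finally I would account for the notation $B^{(1),N}_{p,q}$ and $F^{(1),N}_{p,q}$ appearing on the left of the two displayed equalities: these are just alternative names for the spaces built from the constant sequence $(1)_{j \in \No}$, which is the same sequence as $\sigma^0$, so the leftmost equalities $B^{(1),N}_{p,q} = B^{\sigma^0,N}_{p,q}$ and $F^{(1),N}_{p,q} = F^{\sigma^0,N}_{p,q}$ hold by definition. Chaining these identifications with the standardization output yields both displayed chains of equalities. Honestly I do not expect any real obstacle here: the statement is a clean corollary whose entire content is the observation that a constant admissible sequence is fixed under the standardization reindexing, so the proof is essentially the single line $\beta_j = \sigma_{k(j)} = 1$ inserted into Theorem \ref{standard}. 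The only point requiring a word of care is confirming that $\sigma^0$ satisfies Assumption-free admissibility and that the convention $\sigma^s = (2^{js})_j$ at $s=0$ matches $\sigma^0$.
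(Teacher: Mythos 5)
Your proposal is correct and follows exactly the paper's route: the corollary is stated there as an immediate consequence of Theorem \ref{standard} specialized to the constant sequence $\sigma^0=(1)_{j\in\No}$, for which $\beta_j=\sigma^0_{k(j)}=1$ and hence the standardized space is the classical one with $s=0$. No further comment is needed.
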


This extends \cite[Theorem 3.1.7]{FaLe06} also to the F-spaces and to the case $0<p\le 1$. The corollary will be useful to prove the sufficiency  of the conditions in Theorems \ref{1_nec_suf_B} and \ref{1_nec_suf_F}.

One of the most significant ingredients in the proof of the following theorem, which is Lemma 1 in \cite{CaLe05} and will also be useful later on, was again the above standardization theorem.

\begin{theo}
\label{BFB}
Let $0<p_1<p<p_2\leq \infty$, $0 < q \leq \infty$. Let $N := \seqo{N}{j}$ and $\sigma := \seqo{\sigma}{j}$ be admissible sequences with $N$ satisfying also Assumption \ref{assump}. Let $\sigma'$ and $\sigma''$ be the admissible sequences defined respectively by
    \[
    \sigma'_j = N_j^{n(\frac{1}{p_1}-\frac{1}{p})}\sigma_j, \quad \sigma''_j = N_j^{n(\frac{1}{p_2}-\frac{1}{p})}\sigma_j, \quad j \in \No.
\]
Then
    \[
    B^{\sigma',N}_{p_1,u} \hookrightarrow F^{\sigma,N}_{p,q} \hookrightarrow B^{\sigma'',N}_{p_2,v}
\]
if, and only if, $\;0 < u \leq p \leq v \leq \infty$.
\end{theo}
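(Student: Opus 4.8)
The plan is to separate the two tasks that a "standardization" reduces cleanly: first strip away the general frequency partition, and then treat the resulting classical-frequency Franke--Jawerth embeddings. By Theorem~\ref{standard} the three spaces $B^{\sigma',N}_{p_1,u}$, $F^{\sigma,N}_{p,q}$ and $B^{\sigma'',N}_{p_2,v}$ coincide, with equivalent quasi-norms, with $B^{\beta'}_{p_1,u}$, $F^{\beta}_{p,q}$ and $B^{\beta''}_{p_2,v}$, where $\beta$ is the standardization of $\sigma$ given by (\ref{beta}) and $\beta',\beta''$ are obtained by the same formula applied to $\sigma',\sigma''$. Since the cut-off index $k(j)$ in (\ref{beta}) depends only on $N$ and $\kappa_0$, one has $\beta'_j=N_{k(j)}^{n(1/p_1-1/p)}\beta_j$ and $\beta''_j=N_{k(j)}^{n(1/p_2-1/p)}\beta_j$. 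The first step I would carry out is the elementary estimate $N_{k(j)}\sim 2^j$ (uniformly in $j$), which follows from the minimality of $k(j)$ together with (\ref{N}) and the choice $\lambda_0^{\kappa_0}\ge 2$; by Remark~\ref{equivBoyd}(i) this lets me replace $\beta',\beta''$ by the equivalent sequences $2^{jn(1/p_1-1/p)}\beta_j$ and $2^{jn(1/p_2-1/p)}\beta_j$ without changing the spaces. The whole statement thus reduces to the dyadic-frequency case, with the differential dimensions matching exactly: $\beta'_j\,2^{-jn/p_1}=\beta_j\,2^{-jn/p}=\beta''_j\,2^{-jn/p_2}$.

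\emph{Sufficiency.} It is tempting to chain the available embeddings, but this falls short. Combining Proposition~\ref{BF} with Proposition~\ref{CF-t1}, the left embedding $B^{\sigma',N}_{p_1,u}\hookrightarrow B^{\sigma,N}_{p,\min\{p,q\}}\hookrightarrow F^{\sigma,N}_{p,q}$ requires the sequence $(\sigma'^{-1}_j\sigma_j N_j^{n(1/p_1-1/p)})_j=(1)_j$ to lie in $\ell_{q^*}$ with $1/q^*=(1/\min\{p,q\}-1/u)_+$, hence only $u\le\min\{p,q\}$; dually the right embedding comes out only for $v\ge\max\{p,q\}$. To reach the sharp range $u\le p\le v$ I would pass to the atomic level, invoking the atomic decomposition available for these spaces in the dyadic setting (cf.\ the theory in \cite{FaLe06}), and prove the corresponding sequence-space embeddings $b^{\beta'}_{p_1,u}\hookrightarrow f^{\beta}_{p,q}\hookrightarrow b^{\beta''}_{p_2,v}$. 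There the matching of differential dimensions is exactly what makes a Franke--Jawerth-type H\"older argument work, yielding the fine indices $u\le p$ on the left and $v\ge p$ on the right independently of $q$.

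\emph{Necessity.} Assuming the embeddings hold, I would force $u\le p$ and $v\ge p$ by testing them on explicit families of functions built from atoms over the first $M$ frequency levels, exploiting the concentration/spreading dichotomy. For the left embedding one takes atoms supported on nested cubes sharing a common centre, one per level; computing both quasi-norms shows that $u>p$ makes the embedding constant grow as $M\to\infty$. For the right embedding one instead spreads equal-weight atoms over all $\sim 2^{jn}$ cubes at each level, and the resulting computation rules out $v<p$. In both cases the exact cancellation $\beta'_j2^{-jn/p_1}=\beta_j2^{-jn/p}=\beta''_j2^{-jn/p_2}$ guarantees that the smoothness/integrability factors drop out, so that only the comparison of the fine indices with $p$ survives.

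The main obstacle is precisely this last point: the sharpness of the fine indices. The crude chaining of Propositions~\ref{BF} and~\ref{CF-t1} loses a factor of $q$ in both directions, so the whole content of the theorem lies in closing the gap between $u\le\min\{p,q\}$, $v\ge\max\{p,q\}$ and the optimal $u\le p\le v$ --- on the sufficiency side through the atomic/sequence-space Franke--Jawerth inequality, and on the necessity side through carefully calibrated extremal functions.
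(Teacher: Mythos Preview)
Your plan is sound and matches what the paper indicates. Note that the paper does not actually prove Theorem~\ref{BFB} in the text: it is quoted as Lemma~1 of \cite{CaLe05}, with the single remark that ``one of the most significant ingredients'' is the standardization Theorem~\ref{standard}. Your first step --- standardizing, checking that the cut--off $k(j)$ depends only on $N$, and verifying $N_{k(j)}\sim 2^j$ so that $\beta'_j\sim 2^{jn(1/p_1-1/p)}\beta_j$ and $\beta''_j\sim 2^{jn(1/p_2-1/p)}\beta_j$ --- is exactly that ingredient, and your argument for it is correct.

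For the remaining dyadic step you have correctly diagnosed the issue: chaining Propositions~\ref{BF} and~\ref{CF-t1} only yields $u\le\min\{p,q\}$ and $v\ge\max\{p,q\}$, so the full strength comes from a Franke--Jawerth argument, and your proposal to run it at the sequence--space level (where the matching $\beta'_j 2^{-jn/p_1}=\beta_j 2^{-jn/p}=\beta''_j 2^{-jn/p_2}$ makes the smoothness weights drop out) is the right one. The necessity tests you describe --- one atom per level at a common centre for the left embedding, $\sim 2^{jn}$ disjoint equal--weight atoms per level for the right --- are also the standard extremal families and give the sharp thresholds $u\le p\le v$. This is in line with how the result is obtained in \cite{CaLe05}; there is no genuine gap in your outline, only details to be written out.
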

%

\medskip

As we shall see, our main results will be established in terms of the behaviour of the sequences $\sigma$ and $N$. Sometimes it is useful to deal with the case of general $N$ after having dealt with the more classical situation  when $N=(2^j)_{j\in \No}$, through standardization. The problem afterwards then might be that the criteria obtained are expressed in terms of $(\sigma_{k(j)}^{-1})_{j\in \No}$, for the $k(j)$ defined in \eqref{beta}, instead of the original sequence $(\sigma_j^{-1})_{j \in \No}$. This difficulty can, however, be circumvented by the following observations.

\begin{rem}
\label{propk(j)}
{\em From the definition of $k(j)$ the following two properties easily follow:
\begin{description}
\item[(i)] For $\kappa_0$ the fixed natural number such that $\lambda_0^{\kappa_0} \geq 2$, it holds
$$
k(j+1) \leq k(j) + \kappa_0, \quad j \in \No.
$$
\item[(ii)] There is $c_0 \in \N$ such that
$$
k(j+c_0) > k(j), \quad j \in \No;
$$
for example, $c_0=\kappa_1+j_0$, where $\kappa_1 \in \N$ satisfies $\lambda_1 \leq 2^{\kappa_1}$ and $j_0 \in \No$ is chosen such that $2^{j_0-1} > \lambda_1^{\kappa_0}N_0$.
\end{description}}
\end{rem}

\begin{prop}
\label{sigma-sigmak(j)}
Let $\sigma$ be an admissible sequence and $0 < r \leq \infty$. Let $k(j)$ be defined as in \eqref{beta}. Then
$$\sigma^{-1} \in \ell_r \quad \mbox{if, and only if,} \quad (\sigma_{k(j)}^{-1})_{j \in \No} \in \ell_r.$$
\end{prop}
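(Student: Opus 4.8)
The plan is to treat $j\mapsto k(j)$ as a reindexing of $\No$ that is non-decreasing, has bounded increments and bounded multiplicity, and to combine this with the fact that admissibility makes $\sigma^{-1}$ \emph{slowly varying}, so that $\sigma_i^{-1}$ and $\sigma_{i'}^{-1}$ are comparable whenever $|i-i'|\le\kappa_0$. First I would record two auxiliary facts. Iterating \eqref{sigma} gives, for any $i,i'\in\No$ with $|i-i'|\le\kappa_0$, the two-sided comparison $c^{-1}\sigma_i^{-1}\le\sigma_{i'}^{-1}\le c\,\sigma_i^{-1}$ with $c:=\max\{d_1,d_0^{-1}\}^{\kappa_0}$ depending only on $d_0,d_1,\kappa_0$. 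On the other hand, $k$ is non-decreasing (because $N$ is increasing while $2^{j-1}$ grows with $j$), and by Remark~\ref{propk(j)} its increments satisfy $k(j+1)\le k(j)+\kappa_0$ while $k(j+c_0)>k(j)$; in particular $k(j)\to\infty$, every integer value is attained by $k$ at most $c_0$ times (otherwise $k(j+c_0)\le k(j)$, contradicting the strict growth), and consecutive values of $k$ never jump by more than $\kappa_0$.

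Granting these, the implication ``$\sigma^{-1}\in\ell_r\Rightarrow(\sigma_{k(j)}^{-1})_j\in\ell_r$'' is the easy one. For $r<\infty$ I would simply reorganize the sum over $j$ as a sum over the range of $k$, weighting each value $m$ by its multiplicity $\#\{j:k(j)=m\}\le c_0$, to get $\sum_j\sigma_{k(j)}^{-r}\le c_0\sum_m\sigma_m^{-r}<\infty$. For $r=\infty$ it is immediate, since the range of $k$ is contained in $\No$ and hence $\sup_j\sigma_{k(j)}^{-1}\le\sup_m\sigma_m^{-1}$.

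The hard part will be the converse ``$(\sigma_{k(j)}^{-1})_j\in\ell_r\Rightarrow\sigma^{-1}\in\ell_r$'', precisely because $k$ need not be surjective, so $\sum_m\sigma_m^{-r}$ ranges over indices $m$ that may be skipped by $k$. Here I would use the bounded-increment property to \emph{fill the gaps}: since $k(j)\to\infty$ and $k(j+1)\le k(j)+\kappa_0$, every $m\ge k(0)$ satisfies $|m-k(j)|\le\kappa_0$ for some $j=j(m)$, and then the slow-variation comparison yields $\sigma_m^{-r}\le c^r\sigma_{k(j(m))}^{-r}$. Since for fixed $j$ the indices $m$ with $|m-k(j)|\le\kappa_0$ form an interval of length at most $2\kappa_0+1$, each $j$ is selected by at most $2\kappa_0+1$ of the $m$, so $\sum_{m\ge k(0)}\sigma_m^{-r}\le c^r(2\kappa_0+1)\sum_j\sigma_{k(j)}^{-r}<\infty$; the finitely many terms with $m<k(0)$ contribute only a finite amount, whence $\sigma^{-1}\in\ell_r$. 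The $r=\infty$ case runs identically with suprema replacing sums. The only points needing care are keeping the covering multiplicity $2\kappa_0+1$ bounded and disposing of the finite initial block $m<k(0)$; everything else is bookkeeping with the constants $c$ and $c_0$.
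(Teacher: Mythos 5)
Your proof is correct and follows essentially the same route as the paper: both directions rest on the two properties of $k(j)$ recorded in Remark~\ref{propk(j)} (bounded multiplicity via $k(j+c_0)>k(j)$ for the forward implication, bounded gaps via $k(j+1)\le k(j)+\kappa_0$ together with the admissibility comparison $\sigma_{i}\sim\sigma_{i'}$ for $|i-i'|\le\kappa_0$ for the converse). The paper organizes the forward direction as a split into residue classes mod $c_0$ rather than a direct multiplicity count, but this is only a bookkeeping difference.
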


\begin{proof}
We deal only with the main case when $0<r<\infty$. The case $r=\infty$ can be dealt with usual modifications.

Consider the numbers $\kappa_0$ and $c_0$ as in Remark \ref{propk(j)}.

On one hand,
\begin{eqnarray}
\label{lessthan}
\sum_{j=0}^\infty \sigma_{k(j)}^{-r} & = & \sum_{l=0}^\infty \sum_{m=0}^{c_0-1} \sigma_{k(lc_0+m)}^{-r} \nonumber\\
& = & \sum_{m=0}^{c_0-1} \sum_{l=0}^\infty \sigma_{k(lc_0+m)}^{-r} \nonumber \\
& \leq & \sum_{m=0}^{c_0-1} \sum_{j=0}^\infty \sigma_j^{-r} \nonumber \\
& = & c_0\, \sum_{j=0}^\infty \sigma_j^{-r},
\end{eqnarray}
where the inequality is justified by the fact that, for each fixed $m=0, \ldots, c_0-1$, $(\sigma_{k(lc_0+m)})_{l \in \No}$ is a subsequence of $\sigma$, as follows from Remark \ref{propk(j)}(ii).

On the other hand,
\begin{eqnarray}
\label{greaterthan}
\kappa_0\, \sum_{j=0}^\infty \sigma_{k(j)}^{-r} & = & \sum_{j=0}^\infty \sum_{m=0}^{\kappa_0-1} \sigma_{k(j)}^{-r} \nonumber \\
& \geq & c\, \sum_{j=0}^\infty \sum_{m=0}^{\kappa_0-1} \sigma_{k(j)+m}^{-r} \nonumber \\
& \geq & c\, \sum_{l=k(0)}^\infty \sigma_l^{-r},
\end{eqnarray}
where the first inequality is a direct consequence of the admissibility of $\sigma$ (with the factor $c$ depending on $\kappa_0$) and the second inequality comes from the fact that the term following each $\sigma_{k(j)+\kappa_0-1}^{-r}$ in the middle line above, being $\sigma_{k(j+1)}^{-r}$ is, by Remark \ref{propk(j)}(i), either the next term in the sequence $\sigma^{-r}$ or a term already considered before and that we can discard, turning the total sum smaller, though not smaller than the sum in the last line (because of Remark \ref{propk(j)}(ii)).

Combining \eqref{lessthan} and \eqref{greaterthan}, we get the required result.

\end{proof}

\medskip





\subsection{Atomic decompositions}
\label{atomicdec}


One of the tools we shall need is the atomic representation of functions in spaces of generalized smoothness.
In order to present the atomic decomposition theorem we first
set up some notation and terminology --- see also \cite[Section 4.4]{FaLe06}.

 Let  $\Z^n$ be  the lattice of all
 points in $\R^n$ with integer-valued components.

 If $\nu\in\No $ and $m=(m_1,...,m_n)\in \Z^n$
 we denote by $Q_{\nu m}$ the cube in $\R^n$ centred at
 $N_{\nu}^{-1}m= (N_{\nu}^{-1}m_1,...,N_{\nu}^{-1}m_n)$
 which has sides parallel to the axes and side length~$N_{\nu}^{-1}$.

 If $Q_{\nu m}$ is such a cube in $\R^n$
 and $c>0$ then $cQ_{\nu m}$ denotes the cube in $\R^n$ concentric with $Q_{\nu m}$
 and with side length $cN_{\nu}^{-1}$.

 \medskip


 \begin{defn}
 \label{def-21}
 {\bf (i)} Let $M\in \No$, $c^{*}>1$ and $\kappa >0$.
        A function $\rho :\R^n\to \C$ which is $M$ times differentiable 
 (continuous if $M = 0$)
 is called an  $1_M$-$N$-atom if:
 \begin{equation}
 \label{312-1a}
      \supp\, \rho \subset c^{*}Q_{0m}
      \quad\mbox{for some}\quad m\in\Z^n ,
 \end{equation}
 \begin{equation}
 \label{312-1b}
 \left
     |D^{\alpha} \rho (x)\right |
   \le
      \kappa
  \quad\mbox{if}\quad |\alpha | \le M.
 \end{equation}

 {\bf (ii)} Let $\sigma =(\sigma _j)_{j\in\No}$ be an admissible
sequence, let
 $0<p\le\infty$, $M, L+1 \in \No$, $c^{*}>1$ and $\kappa >0$.
 A function $\rho :\R^n\to \C$ which is $M$ times differentiable 
 (continuous if $M = 0$)
 is called an  $(\sigma , p)_{M,L}$-$N$-atom if:
 \begin{equation}
 \label{312-2a}
 \supp \, \rho \subset c^{*}Q_{\nu m}\quad
 \mbox{for some}\quad \nu\in\N\, ,   m\in\Z^n ,
 \end{equation}
 \begin{equation}
 \label{312-2b}
                   \left | D^{\alpha}\rho (x) \right |
               \le
                   \kappa\,
                          \sigma _{\nu}^{-1} \,
                          N_{\nu}^{\frac{n}{p}+|\alpha|}
 \quad\mbox{if} \quad
             |\alpha |\le M ,
 \end{equation}
 \begin{equation}
 \label{312-2c}
 \int _{\R^n}x^{\gamma}\rho (x) dx=0\quad \mbox{if} \quad |\gamma |\le L.
 \end{equation}
 \end{defn}

 If the atom $\rho$ is located at $Q_{\nu m}$
 (that means $\supp\,\rho \subset c^{*} Q_{\nu m}$ with $\nu\in \No\, $,
 $m\in \Z^n$, $c^{*}>1$)
 then we will denote it by  $\rho _{\nu m}$.

 \medskip



 As in the classical case, the $N$-atoms
 (associated to the  sequence $N$) are normalised building blocks
 satisfying some moment conditions.

 The value of the number $c^{*}>1$ in (\ref{312-1a}) and
 (\ref{312-2a}) is unimportant. It simply makes clear that at the level
 $\nu$
 some controlled overlapping of the supports of $\rho _{\nu m}$
 must be allowed.

 The moment conditions (\ref{312-2c}) can be reformulated as
 $D^{\gamma}\widehat{\rho}(0)=0$ if $|\gamma |\le L$,
 which shows that a sufficiently strong decay of $\widehat{\rho}$ at the
 origin is required.
 If $L<0$ then (\ref{312-2c}) simply means that
 there are no moment conditions required.

 The reason for the normalising factor in (\ref{312-1b}) and (\ref{312-2b})
 is that then there exists a constant $c>0$, depending on $\kappa$, such that for all these atoms we
 have
 $\|\rho \, |\, B^{\sigma, N}_{p,q}\|\le c$ and
 $\|\rho \,|\, F^{\sigma , N}_{p,q} \|\le c$, provided $M$ and $L$ are large enough --- see Theorem \ref{theo-23} below. In \cite{FaLe06} $\kappa$ was fixed to $ 1$ but we can use any other  $\kappa$ to the effect of normalisation.

 \medskip


 If $\nu\in\No\,$, $m\in\Z^n$ and $Q_{\nu m}$ is a cube as above, let
 $\chi _{\nu m}$ be the characteristic function of $Q_{\nu m}$; if
 $0<p\le\infty$
 let
 $$
  \chi _{\nu m}^{(p)}= N_{\nu}^{n/p}\chi _{\nu m}
 $$
 (obvious modification if $p=\infty$)
 be the $L_p$-normalised characteristic function of $Q_{\nu m} $.

 \begin{defn}
 \label{def-22}
 Let $0<p\le\infty$, $0<q\le \infty$. Then:

 \noindent {\bf (i)} $b_{p,q}$ is the collection of all sequences
 $\lambda =\{\lambda _{\nu m}\in \C\, :\, \nu\in\No\, , m\in\Z^n\}$
 such that
 $$
   \|\lambda \,|\,b_{p,q}\| =
 \left ( \sum_{\nu=0}^\infty \left ( \sum_{m \in \Z^n} |\lambda _{\nu m}|^{p} \right )^{q/p} \right )
   ^{1/q}
 $$
 (with the usual modification if $p=\infty$ and/or $q=\infty$) is finite;

 \noindent {\bf (ii)}  $f_{p,q}^N$ is the collection of all sequences
 $\lambda =\{\lambda _{\nu m}\in\C\, :\, \nu \in\No\, , m\in\Z^n \}$
 such that
 $$
   \|\lambda \,|\, f_{p,q}^N\| =
 \left \| \left ( \sum_{\nu=0}^\infty \sum_{m \in \Z^n} |\lambda _{\nu m}\chi _{\nu m}^{(p)}(\cdot )|^{q}
 \right )^{1/q} \, |\, L_p \right \|
 $$
 (with the usual modification if $p=\infty$ and/or $q=\infty$) is finite.
 \end{defn}

 One can easily see that $b_{p,q}$ and $f_{p,q}^N$ are quasi-Banach spaces
 and using $\|\chi _{\nu m}^{(p)}\,|\,L_p\|=1$ it is clear
 that comparing $\|\lambda\,|\, b_{p,q}\|$ and $\|\lambda\,|\, f_{p,q}^N\|$
 the roles of the quasi-norms in $L_p$ and $l_q$ are interchanged.\\



 In \cite{FaLe06} it was proved the following atomic decomposition theorem.

 \begin{theo}
 \label{theo-23}
 Let $N=(N_j)_{j\in\No}$ be an admissible sequence from Assumption \ref{assump} with $\lambda _0 >1$
 and let
 $\sigma =(\sigma _j)_{j\in\No}$ be an admissible sequence.

 Let $0<p<\infty$, respectively $0<p\le\infty$, $0<q\le \infty$,
 and let $M$, $L+1 \in \No$ be such that
 \begin{equation}
 \label{33-4}
 M >\frac{\log _2d_1}{\log _2\lambda _0}
 \end{equation}
 and
 \begin{equation}
 \label{33-5}
 L >-1 +n\left ( \frac{\log _2\lambda _1}{\log _2\lambda _0}
                   \frac{1}{\min (1, p, q)} -1
        \right )
       -\frac{\log _2 d_0}{\log _2 \lambda _0}  ,
 \end{equation}
 respectively
 \begin{equation}
 \label{33-5-bis}
 L >-1 +n\left ( \frac{\log _2\lambda _1}{\log _2\lambda _0}
                   \frac{1}{\min (1, p)} -1
        \right )
       -\frac{\log _2 d_0}{\log _2 \lambda _0}.
 \end{equation}
 Then $g\in \cal S '$ belongs to $F^{\sigma , N}_{p,q}$,
 respectively to $B^{\sigma , N}_{p,q}$,
 if and only if  it can be represented as
 \begin{equation}
 \label{33-6}
 g=\sum_{\nu=0}^\infty\sum_{m \in \Z^n}\lambda _{\nu m}\rho _{\nu m}\;\; ,
 \end{equation}
 convergence being in $\cal S '$,
 where $\rho _{\nu m}$ are  $1_M$-$N$-atoms ($\nu =0$) or
 $(\sigma ,p)_{M,L}$-$N$-atoms ($\nu\in\N$) and
 $\lambda\in f_{p,q}^{N}$, respectively $\lambda\in b_{p,q}$, where
 $\lambda =\{\lambda _{\nu m}\, :\, \nu \in\No \, , m\in \Z^n\}$.

 \noindent Furthermore, for any fixed $c^{*}>1$, any fixed $\kappa >0$,
           and any $M$ and $L$ as above,
 {\em inf} $\|\lambda\,|\, f_{p,q}^{N}\|$, respectively
 {\em inf} $\|\lambda \,|\, b_{p,q}\|$,
 where the infimum is taken over all admissible representations~{\em (\ref{33-6})},
 is an equivalent quasi-norm in $F^{\sigma , N}_{p,q}$, respectively
 $B^{\sigma , N}_{p,q}$.
 \end{theo}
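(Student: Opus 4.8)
The plan is to prove the two implications separately --- the \emph{synthesis} direction (an admissible atomic series lies in the space, with quasi-norm controlled by the sequence quasi-norm) and the \emph{analysis} direction (every element of the space admits such a representation with controlled coefficients) --- following the scheme of Frazier--Jawerth and Triebel, but adapted to the non-dyadic frequency scale $N$ and the admissible weight $\sigma$. The basic quantitative input throughout is that Assumption \ref{assump} gives $\lambda_0^{k-\nu}\le N_k/N_\nu\le\lambda_1^{k-\nu}$ for $k\ge\nu$, while admissibility of $\sigma$ together with Remark \ref{estimations sigma} controls $\sigma_\nu$ by powers of $d_0$ and $d_1$; the conditions \eqref{33-4} and \eqref{33-5}/\eqref{33-5-bis} will turn out to be exactly what is needed to balance these growth rates. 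The finitely many low-level contributions (the terms $\nu<\kappa_0$, where $\varphi_j^N$ is supported in a ball rather than an annulus and the atoms at $\nu=0$ are $1_M$-$N$-atoms carrying no moment conditions) are handled separately and cause no difficulty.

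For the synthesis direction I would first establish a two-sided pointwise estimate for the single-atom piece $\F^{-1}(\varphi_k^N\,\F\rho_{\nu m})=(\F^{-1}\varphi_k^N)*\rho_{\nu m}$, distinguishing $k\ge\nu$ from $k<\nu$. In the range $k\ge\nu$ I would use the smoothness bounds \eqref{312-2b}: since $\varphi_k^N$ is supported away from the origin for $k\ge\kappa_0$, its inverse transform has vanishing moments, so subtracting a Taylor polynomial of the atom and exploiting its $M$ bounded derivatives produces the gain $(N_\nu/N_k)^{M}$ together with rapid off-cube decay from the kernel. In the complementary range $k<\nu$ I would instead use the moment conditions \eqref{312-2c} of the atom against a Taylor expansion of the smooth, slowly-varying kernel, producing the gain $(N_k/N_\nu)^{L+1}$. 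Feeding these estimates into the $\ell_q(L_p)$ norm (for $B$) or the $L_p(\ell_q)$ norm (for $F$) and summing over $k$ reduces matters, in the $F$-case, to a vector-valued Fefferman--Stein maximal inequality; the geometric series in $|k-\nu|$ converge precisely because \eqref{33-4} yields $d_1\lambda_0^{-M}<1$ on the high-frequency side, while \eqref{33-5}/\eqref{33-5-bis} controls the low-frequency tail once the counting of cubes at level $\nu$ is taken into account. This gives $\|g\,|\,F^{\sigma,N}_{p,q}\|\le c\,\|\lambda\,|\,f^N_{p,q}\|$ and its $B$-analogue.

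For the analysis direction I would start from a Calder\'on-type reproducing identity for the given $g\in\S'$, writing $g=\sum_\nu\F^{-1}(\psi_\nu\,\varphi_\nu^N\,\F g)$ with a companion system $\psi_\nu$ of the same support type, and then discretise each band-limited piece by sampling a suitable convolution kernel on the lattice $N_\nu^{-1}\Zn$. After normalising by the factor in \eqref{312-2b}, the resulting building blocks are genuine $(\sigma,p)_{M,L}$-$N$-atoms: their supports sit in dilates of $Q_{\nu m}$ by construction, the derivative bounds follow from the kernel estimates, and the moment conditions \eqref{312-2c} come from the spectral support of $\varphi_\nu^N$ being bounded away from the origin for $\nu\ge\kappa_0$. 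The coefficients $\lambda_{\nu m}$ are, up to the normalising factor, localised suprema of the $\nu$-th piece over $c^*Q_{\nu m}$, and a maximal-function comparison relating these local suprema to the Littlewood--Paley square function yields $\|\lambda\,|\,f^N_{p,q}\|\le c\,\|g\,|\,F^{\sigma,N}_{p,q}\|$; convergence of \eqref{33-6} in $\S'$ is checked along the way. Identifying the infimum over all admissible representations as an equivalent quasi-norm then follows by combining the two inequalities.

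The step I expect to be the main obstacle is making the two-sided kernel estimates \emph{uniform} in the generalized setting, where the clean geometric ratios $2^{\pm(k-\nu)}$ of the classical theory are replaced by the bracketed ratios $\lambda_0^{k-\nu}\le N_k/N_\nu\le\lambda_1^{k-\nu}$ together with the $\sigma$-weights. It is precisely the mismatch $\lambda_0\neq\lambda_1$ that forces the factor $\frac{\log_2\lambda_1}{\log_2\lambda_0}$ into \eqref{33-5}/\eqref{33-5-bis}: one must control the worst case both in the summation over $k$ and in the counting of cubes at each level, and only after inserting the bounds from Remark \ref{estimations sigma} do the admissibility-driven exponents line up to give absolutely convergent geometric series. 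A secondary technical point is the Fefferman--Stein inequality on the $N$-adic lattice scale, where the cube side lengths $N_\nu^{-1}$ (rather than $2^{-\nu}$) must be shown to interact correctly with the Hardy--Littlewood maximal operator. One might hope to import the whole result through the standardization Theorem \ref{standard}, but since that identity replaces the $N$-cubes by dyadic cubes it does not transfer the atomic geometry directly, so the direct route seems preferable.
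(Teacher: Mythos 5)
The paper does not prove this theorem itself: it is imported verbatim from \cite{FaLe06}, where the argument follows exactly the Frazier--Jawerth/Triebel scheme you outline (kernel estimates for $\F^{-1}(\varphi_k^N\F\rho_{\nu m})$ split according to $k\ge\nu$ versus $k<\nu$, using the atom's $M$ derivatives against the vanishing moments of the kernel on one side and the atom's $L$ moment conditions against a Taylor expansion of the kernel on the other, a vector-valued maximal inequality for the $F$-case, and a Calder\'on-type reproducing identity plus lattice discretisation for the analysis direction). Your sketch matches that approach, including the correct identification of the roles of \eqref{33-4} and \eqref{33-5}/\eqref{33-5-bis} and of the $\lambda_0\neq\lambda_1$ mismatch as the genuinely new difficulty relative to the dyadic case.
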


For further comments, remarks, examples related to the above theorem
we refer the interested reader to~\cite{FaLe06}. The use of arbitrary $\kappa > 0$ instead of $\kappa = 1$ changes only the equivalence constants for the quasi-norm.

For the sake of completeness, we mention now an observation from an unpublished manuscript of Farkas and Leopold from 2007.
\begin{rem}
\label{imp-rem}
{\em
     Let  $\widetilde{N}= (\widetilde{N_j})_{j\in\No}$
     be an admissible sequence as in~Assumption \ref{assump} 
     which is equivalent to the sequence~$N$.

     Let also $\widetilde{\sigma} = (\widetilde{\sigma _j})_{j\in\No}$
     be an admissible sequence equivalent to $\sigma$.

     It follows directly from Definition~\ref{def-21}
     that for arbitrary fixed $c^{*}>1$ and $\kappa > 0$ there exist
     $\widetilde{c^{*}}>1$ and $\widetilde{\kappa} > 0$
     such that
             any $1_M$-$N$-atom is an $1_M$-$\widetilde{N}$-atom
     and such that
             any $(\sigma , p)_{M,L}$-atom is an $(\widetilde{\sigma} , p)_{M,L}$-$\widetilde{N}$-atom
     with respect to the numbers $\widetilde{c^{*}}$ and $\widetilde{\kappa}$.

     Clearly    $\widetilde{c^{*}}$ and $\widetilde{\kappa}>0$  depend on $c^{*}$, $\kappa$, $M$, $p$
     and on the equivalence constants for the sequences $\sigma$ and $N$.
}
\end{rem}
Let us denote by $\alpha _{\sigma}$ and $\beta _{\sigma}$, respectively
$\alpha _{N}$ and $\beta _N$, the Boyd indices of $\sigma$ and $N$ respectively.

According to Remark~\ref{imp-rem}, Remark~\ref{equivBoyd} and taking into account
the definition of Boyd indices,
conditions~(\ref{33-4})-(\ref{33-5-bis})
can be reformulated and improved as
\begin{equation}
   \label{33-4-new}
   M>\frac{\alpha _{\sigma}}{\beta _{N}}
   \quad\mbox{(replacement for}\quad \mbox{(\ref{33-4}))}\, ,
\end{equation}
and
\begin{equation}
\label{33-5-new}
      L >-1 +n
              \left (
                     \frac{\alpha _N}{\beta _N}
                     \frac{1}{\min (1, p, q)} -1
              \right )
         -
            \frac{\beta _{\sigma}}{\beta _N}
       \quad\mbox{(replacement for}\quad \mbox{(\ref{33-5}))}\, ,
\end{equation}
 \begin{equation}
 \label{33-5-bis-new}
 L >-1 +n\left (
                   \frac{\alpha _N}{\beta _N}
                   \frac{1}{\min (1, p)} -1
        \right )
       -\frac{\beta _{\sigma}}{\beta _N}
       \quad\mbox{(replacement for}\quad \mbox{(\ref{33-5-bis}))}.
 \end{equation}

\begin{rem} \label{atomic}{\em
We will refer to the above theorem, with conditions (\ref{33-4-new})-(\ref{33-5-bis-new}),
as to the atomic decomposition theorem in function spaces of generalized smoothness.}
\end{rem}

\begin{rem}
{\em We would like to stress that in the ``if'' assertion of the atomic decomposition theorem the convergence of \eqref{33-6} in ${\cal S}'$ is not an assumption, but rather a consequence of the hypotheses that the coefficients belong to $f^N_{p,q}$ or $b_{p,q}$. The proof of this can be done with the help of the results in Remark \ref{estimations sigma} and by adapting to the general situation the corresponding proof of Corollary 13.9(i) in \cite{Tri97}, more detailed explained in Proposition 1.20 in \cite{Mou01a}.}
\end{rem}

\subsection{Lacunary Fourier series}


%

At one point we shall need a specific result about lacunary Fourier series, that is, of Fourier series of the form
$$
\sum_{j=1}^\infty b_j e^{i\lambda_jt}
$$
where $(\lambda_j)_j$ is some given sequence of positive integers for which there exists $q$ such that $\frac{\lambda_{j+1}}{\lambda_j} > q > 1$, $j\in \N$.

\bigskip

For the following result, check \cite[p. 204]{Kah64} and references therein.

\begin{prop} \label{Kah}
If  $\; \sum_{j=1}^\infty b_j e^{i\lambda_jt}$, with $(\lambda_j)_j$ as above, is the Fourier series of a function of $L_1([0,2\pi])$, then $(b_j)_j \in \ell_2$.
\end{prop}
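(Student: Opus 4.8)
The plan is to prove the quantitatively stronger fact that $f\in L_2([0,2\pi])$ with $\|f\,|\,L_2\|\le C\,\|f\,|\,L_1\|$, whence $(b_j)\in\ell_2$ follows at once, since by Parseval the only nonzero Fourier coefficients of $f$ are the $\hat f(\lambda_j)=b_j$. I would route everything through the Fej\'er means $\sigma_N f:=K_N\ast f$, where $K_N$ is the Fej\'er kernel, and first record three elementary facts. Because $\widehat{\sigma_N f}(k)=\hat K_N(k)\hat f(k)$ and $\hat f$ is supported on the lacunary set $\{\lambda_j\}$, each $\sigma_N f$ is a trigonometric polynomial with frequencies in $\{\lambda_j\}$, namely $\sigma_N f=\sum_j w_{N,j}\,b_j\,e^{i\lambda_j t}$ with weights $w_{N,j}=\hat K_N(\lambda_j)\in[0,1]$ that satisfy $w_{N,j}\to1$ as $N\to\infty$ for each fixed $j$. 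Since $K_N\ge0$ with $\frac1{2\pi}\int K_N=1$, the means are $L_1$-contractions, $\|\sigma_N f\,|\,L_1\|\le\|f\,|\,L_1\|$. Finally, by orthonormality of the $e^{i\lambda_j t}$ one has $\|\sigma_N f\,|\,L_2\|^2=\sum_j w_{N,j}^2\,|b_j|^2$.

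The decisive ingredient is a \emph{reverse} inequality that holds precisely because the spectrum is lacunary: there is a constant $C=C(q)$ such that every trigonometric polynomial $P=\sum_j c_j e^{i\lambda_j t}$ with frequencies in $\{\lambda_j\}$ obeys $\|P\,|\,L_2\|\le C\,\|P\,|\,L_1\|$. Granting this and applying it to $P=\sigma_N f$, the $L_1$-contraction property gives $\|\sigma_N f\,|\,L_2\|\le C\,\|f\,|\,L_1\|$ uniformly in $N$. Combining this with the identity for $\|\sigma_N f\,|\,L_2\|^2$ and Fatou's lemma applied to the (fixed) sum over $j$,
\[
\sum_j |b_j|^2=\sum_j\Big(\lim_{N\to\infty} w_{N,j}^2\Big)|b_j|^2\le\liminf_{N\to\infty}\sum_j w_{N,j}^2|b_j|^2=\liminf_{N\to\infty}\|\sigma_N f\,|\,L_2\|^2\le C^2\|f\,|\,L_1\|^2<\infty,
\]
which is exactly the assertion of Proposition~\ref{Kah}.

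It remains to justify the reverse inequality, and this is where the gap condition $\lambda_{j+1}/\lambda_j>q>1$ really enters; it is the main obstacle. I would argue by interpolation between the exponents $2$ and $4$. On one side, log-convexity of the $L_p$-norms gives $\|P\,|\,L_2\|\le\|P\,|\,L_1\|^{1/3}\,\|P\,|\,L_4\|^{2/3}$. On the other side, the Hadamard gap condition forces each integer to admit only a bounded number (depending solely on $q$) of representations as a difference $\lambda_i-\lambda_j$; expanding $\|P\,|\,L_4\|^4=\||P|^2\,|\,L_2\|^2$ and estimating the off-diagonal contributions by Cauchy--Schwarz against this bounded multiplicity yields the $\Lambda(4)$-estimate $\|P\,|\,L_4\|\le C'\,\|P\,|\,L_2\|$ with $C'=C'(q)$. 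Substituting this into the interpolation inequality and absorbing the resulting factor $\|P\,|\,L_2\|^{2/3}$ on the left produces $\|P\,|\,L_2\|\le C\,\|P\,|\,L_1\|$ with $C=C'^2$, as required. This reverse estimate is the classical lacunary phenomenon underlying the cited result of Kahane; an equivalent route to it proceeds through Riesz products, but the $\Lambda(4)$ counting above is the most self-contained.
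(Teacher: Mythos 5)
Your argument is correct and complete, but it is worth noting that the paper does not prove Proposition~\ref{Kah} at all: it is quoted from the literature with a pointer to Kahane's survey. So you have supplied a genuinely self-contained proof where the authors rely on a citation. Your route is the standard one for showing that Hadamard lacunary sets are Sidon-type/$\Lambda(4)$ sets: the chain $\|P\,|\,L_2\|\le\|P\,|\,L_1\|^{1/3}\|P\,|\,L_4\|^{2/3}$ combined with the counting estimate $\|P\,|\,L_4\|^4\le(1+R(q))\,\|P\,|\,L_2\|^4$ (the $m=0$ frequency of $|P|^2$ giving exactly $\|P\,|\,L_2\|^4$ and each $m\neq0$ having at most $1+\log_q\frac{q}{q-1}$ representations as $\lambda_i-\lambda_j$) does yield $\|P\,|\,L_2\|\le C(q)\|P\,|\,L_1\|$ for lacunary polynomials, and the transfer to general $f\in L_1$ via Fej\'er means, the $L_1$-contractivity of $\sigma_N$, and Fatou on $\sum_j w_{N,j}^2|b_j|^2$ is exactly right; note that $\sigma_Nf$ retains the same gap constant $q$, so the constant is uniform in $N$. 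Two small points you should make explicit: you are tacitly using normalized Lebesgue measure on $[0,2\pi]$ (otherwise $\|P\,|\,L_1\|\le\|P\,|\,L_2\|$ and the Parseval identity pick up harmless constants), and the absorption step $\|P\,|\,L_2\|^{1/3}\le C'^{2/3}\|P\,|\,L_1\|^{1/3}$ should be prefaced by the (trivial) observation that $\|P\,|\,L_2\|$ is finite and may be assumed nonzero. What your approach buys is independence from the reference; what the citation buys the authors is brevity, since Proposition~\ref{Kah} is only used as a black box in the necessity arguments of Theorems~\ref{1_nec_suf_B} and~\ref{1_nec_suf_F}.
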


\medskip

%
%

Related to this, we shall also need the following technical lemma of \cite[Lemma 5.5.2]{Abel-tese} and the theorem which we state and prove afterwards, though the proof follows along the same lines of a corresponding result in \cite[Theorem 4.2.1; see also Remark 4.2.2.(c)]{Abel-tese}.

\begin{lem}
\label{techlem}
Let $N=(2^j)_{j\in\No}$ and consider a function system $\varphi = (\varphi_j)_{j\in\No} \in \Phi^N$ as in Definition \ref{def-decomp} built in the following way: for each $j \in \N \setminus \{ 1 \}$, $\, \varphi_j = \varphi_1(2^{-j+1} \cdot)$, where, for some suitable $a>0$,  $\varphi_1 \in {\cal S}$ is chosen such that
$$
\varphi_1(\xi) + \varphi_1(2^{-1}\xi) = 1 \quad \mbox{if } \; 2 \leq |\xi| \leq 4,
$$

$$
\varphi_1(\xi) = 1 \quad \mbox{if } \; 2(1-a) \leq |\xi| \leq 2(1+a)
$$
and
$$
{\rm supp}\, \varphi_1 \subset \{ \xi \in \R^n : \, (1+a) \leq |\xi| \leq 4(1-a) \};
$$
$\varphi_0 \in {\cal S}$ is chosen so that $\varphi_0(\xi)+\varphi_1(\xi)=1$ if $|\xi| \leq 2$ and ${\rm supp}\, \varphi_0 \subset \{ \xi \in \R^n : \, |\xi| \leq 2 \}$.
Consider $e_1=(1,0,\ldots,0) \in \Rn$. Given $\zeta \in {\cal S}$, $(b_j)_{j\in\N} \subset \C$ with $|b_j| \leq c_r 2^{jr}$ for some $r>0$ and $k \in \N$, the function
\begin{equation}
\label{V_k}
V_k := \sum_{j=1}^\infty b_j \zeta(\cdot - 2^j e_1) \varphi_k
\end{equation}
is well-defined with convergence in ${\cal S}$ and, for any given $d>0$,
\begin{equation}
\label{liminS}
\lim_{k \to \infty} 2^{kd} (V_k -b_k \zeta(\cdot - 2^k e_1)) = 0 \quad \mbox{in } \, {\cal S}.
\end{equation}
\end{lem}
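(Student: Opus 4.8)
The plan is to read everything in physical space, treating $\varphi_k$ as a compactly supported smooth cut-off and exploiting the fast decay of the Schwartz function $\zeta$. First I would record the geometry. Since $\varphi_k=\varphi_1(2^{-k+1}\cdot)$ for $k\ge 2$ (and $\varphi_1$ itself for $k=1$), the support hypothesis on $\varphi_1$ gives $\supp\varphi_k\subset\{x\in\R^n:2^{k-1}(1+a)\le|x|\le 2^{k+1}(1-a)\}$, while the normalisation $\varphi_1\equiv1$ on $\{2(1-a)\le|\xi|\le 2(1+a)\}$ yields $\varphi_k\equiv1$ on the annulus $A_k:=\{x:2^k(1-a)\le|x|\le 2^k(1+a)\}$. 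The translation centre $2^ke_1$ has modulus $2^k$, so it lies in the interior of $A_k$ and, crucially, its distance to the set $\{\varphi_k\ne1\}$ is at least $a\,2^k$. I would also note that, for $x\in\supp\varphi_k$ and $j\ne k$, one has $|x-2^je_1|\ge a\,2^{k-1}$ (for $1\le j\le k-1$ this follows from $|x|\ge 2^{k-1}(1+a)$ and $2^j\le2^{k-1}$; for $j=k+1$ from $|x|\le2^{k+1}(1-a)$), and moreover $|x-2^je_1|\ge 2^{j-1}$ whenever $j\ge k+2$. Throughout I write $p_{\alpha,\beta}(f):=\sup_{x\in\R^n}|x^\alpha D^\beta f(x)|$ for the Schwartz seminorms, recalling that convergence in $\cal S$ and the limit \eqref{liminS} are checked seminorm by seminorm, the decay rate $M$ being allowed to depend on $(\alpha,\beta)$.

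For the well-definedness of \eqref{V_k} in $\cal S$, each summand is a Schwartz function supported in the fixed compact set $\supp\varphi_k$, on which $|x^\alpha|\le 2^{(k+1)|\alpha|}$. By the Leibniz rule together with $|D^{\beta'}\varphi_k|\le c$, every seminorm of $b_j\,\zeta(\cdot-2^je_1)\varphi_k$ is controlled by $c(k,\alpha,\beta)\,|b_j|\sup_{x\in\supp\varphi_k}(1+|x-2^je_1|)^{-M}$ for arbitrary $M$. For $j\ge k+2$ the geometric bound $|x-2^je_1|\ge 2^{j-1}$ combined with $|b_j|\le c_r2^{jr}$ makes the $j$-th seminorm $\le c\,2^{-j(M-r)}$, which is summable once $M>r$; the finitely many remaining terms are harmless. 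Completeness of $\cal S$ then gives convergence of the series in $\cal S$.

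For \eqref{liminS} I would isolate the diagonal term, writing
\[
V_k-b_k\zeta(\cdot-2^ke_1)=\underbrace{b_k\,\zeta(\cdot-2^ke_1)(\varphi_k-1)}_{(A)}+\underbrace{\sum_{j\ne k}b_j\,\zeta(\cdot-2^je_1)\varphi_k}_{(B)}.
\]
Term $(B)$ is supported in $\supp\varphi_k$, so the estimate above applies: splitting the sum into $1\le j\le k-1$ and $j=k+1$ (where $|x-2^je_1|\ge a2^{k-1}$, hence the decay factor is $\le c\,2^{-kM}$) and $j\ge k+2$ (where $|x-2^je_1|\ge 2^{j-1}$), and using $|b_j|\le c_r2^{jr}$ together with $|x^\alpha|\le 2^{(k+1)|\alpha|}$, each seminorm is bounded by $c\,k\,2^{k(r+|\alpha|-M)}$. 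Hence $2^{kd}p_{\alpha,\beta}((B))\to0$ as soon as $M>r+|\alpha|+d$.

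The genuine obstacle is term $(A)$, because $\varphi_k-1$ is not compactly supported, so the polynomial weight $x^\alpha$ is unbounded on its support. Here I would use the geometry recorded above: $(A)$ is supported in $\{\varphi_k\ne1\}$, where $|x-2^ke_1|\ge a2^k$, and on this set the triangle inequality gives $|x|\le 2^k+|x-2^ke_1|\le(1+a^{-1})|x-2^ke_1|$, so $|x^\alpha|\le(1+a^{-1})^{|\alpha|}|x-2^ke_1|^{|\alpha|}$. Since $|D^{\beta'}(\varphi_k-1)|\le c$ and $|D^{\beta''}\zeta(\cdot-2^ke_1)|\le c_M(1+|x-2^ke_1|)^{-M}$, Leibniz yields $|x^\alpha D^\beta[\zeta(\cdot-2^ke_1)(\varphi_k-1)]|\le c_M\,|x-2^ke_1|^{|\alpha|}(1+|x-2^ke_1|)^{-M}$; because $t\mapsto t^{|\alpha|}(1+t)^{-M}\le t^{|\alpha|-M}$ is decreasing for $M>|\alpha|$ and $t\ge a2^k\ge1$ (for $k$ large), this is $\le c_M\,2^{k(|\alpha|-M)}$. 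With $|b_k|\le c_r2^{kr}$ one gets $p_{\alpha,\beta}((A))\le c\,2^{k(r+|\alpha|-M)}$, whence $2^{kd}p_{\alpha,\beta}((A))\to0$ once $M>r+|\alpha|+d$. As both bounds decay for each fixed pair $(\alpha,\beta)$ after choosing $M$ large, \eqref{liminS} follows, completing the argument.
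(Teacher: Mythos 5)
Your argument is correct and complete. Note that the paper itself does not prove this lemma: it is quoted from \cite[Lemma 5.5.2]{Abel-tese}, so there is no in-text proof to compare against; your write-up is therefore a useful self-contained substitute, and it follows the natural route one would expect the cited source to take (isolate the diagonal term via $\varphi_k\equiv 1$ on the annulus $\{2^k(1-a)\le|\xi|\le 2^k(1+a)\}$ containing $2^k e_1$, then beat the polynomial weight $x^\alpha$ and the growth $|b_j|\le c_r2^{jr}$ by the rapid decay of $\zeta$ away from the translation centres). The two points that genuinely need care --- that in term $(B)$ the weight $|x^\alpha|$ is controlled by $2^{(k+1)|\alpha|}$ on $\supp\varphi_k$ while the separation $|x-2^je_1|\gtrsim 2^{\max\{j,k\}-1}$ supplies arbitrarily fast decay, and that in term $(A)$ the non-compactly supported factor $\varphi_k-1$ lives where $|x-2^ke_1|\ge a2^k$, so $|x|\le(1+a^{-1})|x-2^ke_1|$ lets the decay of $\zeta$ absorb the weight --- are both handled correctly, and the final choice $M>r+|\alpha|+d$ for each fixed seminorm is exactly what is needed.
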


\begin{theo}
\label{functionseq}
Let $0<p \leq \infty$ ($0<p<\infty$ in the case of $F$-spaces), $0<q\leq \infty$ and $\sigma$ be admissible. Let $\psi \in {\cal S} \setminus \{ 0 \}$ and $(b_k)_{k \in \N} \subset \C$ with $|b_k| \leq c_r 2^{kr}$ for some $r>0$. Then
$$
W(x_1,\ldots,x_n) := \sum_{j=1}^\infty b_j e^{i2^jx_1}
\vspace{-3mm}
$$
converges in ${\cal S}'$ and
\begin{equation}
\label{equivBFlq}
\psi W \in B^\sigma_{p,q} \; \Leftrightarrow \; (\sigma_k b_k)_{k \in \N} \in \ell_q \; \Leftrightarrow \; \psi W \in F^\sigma_{p,q}.
\end{equation}
\end{theo}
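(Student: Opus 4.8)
The plan is to work in the standardized setting $N_j=2^j$ (which is what the notation $B^\sigma_{p,q}$, $F^\sigma_{p,q}$ refers to) and to exploit that the quasi-norms are independent of the chosen admissible system; I would fix the particular system $\varphi=(\varphi_j)_{j\in\No}$ of Lemma \ref{techlem}, whose dyadic localization is adapted to the lacunary frequencies $2^je_1$. Convergence of $W$ in $\mathcal{S}'$ comes from testing against $\phi\in\mathcal{S}$: one gets $\langle W,\phi\rangle=\sum_{j\ge 1}b_j\,\widehat\phi(-2^je_1)$, and since $\widehat\phi$ decays faster than any polynomial while $|b_j|\le c_r2^{jr}$, the series converges (dominated by $2^{jr}2^{-jM}$ with $M>r$) and is bounded by a single Schwartz seminorm of $\phi$, giving $W\in\mathcal{S}'$ and hence $\psi W\in\mathcal{S}'$.

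The heart of the argument is the identification of the frequency pieces. Since $\mathcal{F}(\psi W)=\sum_{j\ge1}b_j\,\widehat\psi(\cdot-2^je_1)$, multiplication by $\varphi_k$ yields exactly the function $V_k$ of Lemma \ref{techlem} with $\zeta=\widehat\psi$. Applying $\mathcal{F}^{-1}$ (continuous on $\mathcal{S}$) and using $\mathcal{F}^{-1}(\widehat\psi(\cdot-2^ke_1))(x)=e^{i2^kx_1}\psi(x)$ gives, for $k\ge1$,
$$
g_k:=\mathcal{F}^{-1}(\varphi_k\mathcal{F}(\psi W))=b_k\,e^{i2^kx_1}\psi+R_k,\qquad R_k:=\mathcal{F}^{-1}\bigl(V_k-b_k\,\widehat\psi(\cdot-2^ke_1)\bigr),
$$
where $2^{kd}R_k\to0$ in $\mathcal{S}$ for every $d>0$ by \eqref{liminS}; the piece $g_0$ is a fixed Schwartz function contributing only a finite constant. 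The decisive point is that $|b_ke^{i2^kx_1}\psi(x)|=|b_k|\,|\psi(x)|$, so $g_k$ is, up to a super-polynomially small error, the scalar $b_k$ times the fixed profile $|\psi|$.

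In the Besov case I would substitute this into $\sum_k\sigma_k^q\|g_k\mid L_p\|^q$ and use the (quasi-)triangle inequality both ways to obtain $\bigl|\,\|g_k\|_p-|b_k|\,\|\psi\|_p\,\bigr|\lesssim\|R_k\|_p$. As $\|R_k\|_p=o(2^{-kd})$ for all $d$ whereas $\sigma_k\le c_\epsilon2^{(\alpha_\sigma+\epsilon)k}$ (Remark \ref{estimations sigma}), the error series $\sum_k\sigma_k^q\|R_k\|_p^q$ always converges; since $0<\|\psi\|_p<\infty$ this makes finiteness of the norm equivalent to $(\sigma_kb_k)_k\in\ell_q$. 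In the Triebel--Lizorkin case I would argue pointwise first: $\bigl|\,|g_k(x)|-|b_k||\psi(x)|\,\bigr|\le|R_k(x)|$, so the $\ell_q$-sum of $\sigma_k|g_k(x)|$ differs from $|\psi(x)|\bigl(\sum_k\sigma_k^q|b_k|^q\bigr)^{1/q}$ by at most $\bigl(\sum_k\sigma_k^q|R_k(x)|^q\bigr)^{1/q}$. From $|R_k(x)|\le C_{M,d}2^{-kd}(1+|x|^2)^{-M/2}$ one chooses $d$ large relative to $\alpha_\sigma$ and $M$ with $Mp>n$ to see this error has finite $L_p$-norm; taking $L_p$-norms then gives $\|\psi W\mid F^\sigma_{p,q}\|\sim\|\psi\|_p\bigl(\sum_k\sigma_k^q|b_k|^q\bigr)^{1/q}$ up to a finite additive error, and again finiteness is equivalent to $(\sigma_kb_k)_k\in\ell_q$.

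The main obstacle is exactly the localization recorded in Lemma \ref{techlem}: the fact that at frequency level $k$ only the single lacunary term $j=k$ survives, with an error decaying faster than any negative power of $2^k$. Once that is in hand, what remains is quasi-norm bookkeeping for $0<p<1$ and $0<q<1$ (replacing the triangle inequality by $p$- and $q$-subadditivity) and the verification that the error contributions to both norms are finite irrespective of whether $(\sigma_kb_k)_k\in\ell_q$ --- precisely the place where the at most exponential growth of $\sigma$ from Remark \ref{estimations sigma} balances the super-exponential decay of $R_k$.
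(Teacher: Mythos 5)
Your proposal is correct and follows essentially the same route as the paper: the identification of $\varphi_k\mathcal{F}(\psi W)$ with the $V_k$ of Lemma \ref{techlem} (with $\zeta=\mathcal{F}\psi$), the super-polynomially small remainder $R_k$ from \eqref{liminS}, the two-sided quasi-triangle inequality in $\ell_q(L_p)$ for the Besov case and the pointwise-then-$L_p$ argument for the Triebel--Lizorkin case, with the at most exponential growth of $\sigma_k$ absorbing the error terms. No substantive difference from the paper's proof.
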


\begin{proof}
The hypothesis on the sequence $(b_k)_{k \in \N}$ immediately guarantees that $W$ makes sense in ${\cal S}'$ and is indeed a periodic distribution on $\Rn$ (cf. \cite[section 3.2]{ScTr87}). Then it is a straightforward calculation to see that
\begin{eqnarray*}
{\cal F}(\psi W) & = & \sum_{j=1}^\infty b_j {\cal F}(\psi e^{i2^jx_1}) \\
& = & \sum_{j=1}^\infty b_j ({\cal F}\psi)*\delta_{(2^j,0,\ldots,0)} \\
& = & \sum_{j=1}^\infty b_j ({\cal F}\psi)(\cdot -2^j e_1),
\end{eqnarray*}
where $e_1$ stands for $(1,0,\ldots,0) \in \R^n$.

Considering a system $\varphi$ as in Lemma \ref{techlem}, then
$$
\varphi_k {\cal F}(\psi W) = \sum_{j=1}^\infty b_j ({\cal F}\psi)(\cdot -2^j e_1) \varphi_k
$$
can be taken as the $V_k$ in \eqref{V_k}, $k \in \N$, for the choice $\zeta = {\cal F}\psi$. Therefore the conclusion \eqref{liminS} reads here as
$$
\lim_{k \to \infty} 2^{kd} (\varphi_k {\cal F}(\psi W) -b_k {\cal F}(\psi e^{i2^kx_1})) = 0 \quad \mbox{in } \, {\cal S},
$$
where $d>0$ is at our disposal. Applying the inverse Fourier transformation we get
\begin{equation}
\label{usedforBandF}
\lim_{k \to \infty} 2^{kd} ({\cal F}^{-1}(\varphi_k {\cal F}(\psi W)) -b_k \psi e^{i2^kx_1}) = 0 \quad \mbox{in } \, {\cal S}
\end{equation}
and, using ${\cal S} \hookrightarrow L_p$, also
\begin{equation}
\label{limLp}
\lim_{k \to \infty} 2^{kd} \| {\cal F}^{-1}(\varphi_k {\cal F}(\psi W)) -b_k \psi e^{i2^kx_1} | L_p \| = 0.
\end{equation}

Notice now that (with the usual modification in the case $q=\infty$) we have
\begin{eqnarray}
\lefteqn{\Big( \sum_{k=1}^\infty \sigma_k^q  \| b_k \psi e^{i2^kx_1} | L_p \|^q \Big)^{1/q}} \nonumber \\
& \leq & c\, \Big( \sum_{k=1}^\infty \sigma_k^q \| {\cal F}^{-1}(\varphi_k {\cal F}(\psi W)) | L_p \|^q \Big)^{1/q}  \label{quasitri} \\
&   &  + \;  c\, \Big( \sum_{k=1}^\infty \sigma_k^q \| {\cal F}^{-1}(\varphi_k {\cal F}(\psi W)) -b_k \psi e^{i2^kx_1} | L_p \|^q  \Big)^{1/q} \nonumber
\end{eqnarray}
and a corresponding estimation obtained by interchanging the roles of $b_k \psi e^{i2^kx_1}$ and ${\cal F}^{-1}(\varphi_k {\cal F}(\psi W))$. Since the last term in \eqref{quasitri} can be estimated from above by
$$
\Big( \sum_{k=1}^\infty \sigma_k^q 2^{-kdq}  \Big)^{1/q} \sup_{k \in \N} 2^{kd} \| {\cal F}^{-1}(\varphi_k {\cal F}(\psi W)) -b_k \psi e^{i2^kx_1} | L_p \|
$$
and, from \eqref{sigma}, $\sigma_k \leq \sigma_0 2^{k \log_2 d_1}$, by choosing $d>\log_2 d_1$ we get, also with the help of \eqref{limLp}, that the above expression is finite and therefore, from \eqref{quasitri} and the corresponding estimate referred to above,
\begin{eqnarray*}
\sum_{k=1}^\infty \sigma_k^q \| {\cal F}^{-1}(\varphi_k {\cal F}(\psi W)) | L_p \|^q &  & \mbox{is }\, \mbox{finite}
\end{eqnarray*}

\noindent if, and only if,
\begin{eqnarray*}
\sum_{k=1}^\infty \sigma_k^q  \| b_k \psi e^{i2^kx_1} | L_p \|^q & & \mbox{is }\, \mbox{finite}.
\end{eqnarray*}
That is, and after simplifying the last expression (taking also into consideration the hypothesis $\psi \in {\cal S} \setminus \{ 0 \}$),
$$
\psi W \in B^\sigma_{p,q}  \quad \mbox{if, and only if,} \quad  (\sigma_k b_k)_{k \in \N} \in \ell_q.
$$

As for $F^\sigma_{p,q}$, with $0<p,q<\infty$, we start by observing that from \eqref{usedforBandF} it follows, in particular, that for any $m \in \N$ and any $d>0$
$$
\lim_{k \to \infty} \sup_{x \in \Rn} \{(1+|x|)^m 2^{kd} |{\cal F}^{-1}(\varphi_k {\cal F}(\psi W)) -b_k \psi e^{i2^kx_1}|\} = 0.
$$
Then we have, pointwisely, with $d' > d$, that
\begin{eqnarray*}
\lefteqn{(1+|x|)^{mq} \sum_{k=1}^\infty 2^{kdq} |{\cal F}^{-1}(\varphi_k {\cal F}(\psi W)) -b_k \psi e^{i2^kx_1}|^q} \\
& \leq & \Big( \sum_{k=1}^\infty 2^{k(d-d')q} \Big) \big( \sup_{k\in \N} \sup_{x \in \Rn} (1+|x|)^m 2^{kd'} |{\cal F}^{-1}(\varphi_k {\cal F}(\psi W)) -b_k \psi e^{i2^kx_1}|\big)^q
\end{eqnarray*}
is finite and therefore the series of functions above converges pointwisely and, moreover,
\begin{equation}
\label{supdif}
\sup_{x \in \Rn} \{(1+|x|)^{mq} \sum_{k=1}^\infty 2^{kdq} |{\cal F}^{-1}(\varphi_k {\cal F}(\psi W)) -b_k \psi e^{i2^kx_1}|^q\} < \infty.
\end{equation}
Using now that $\sigma_k \leq \sigma_0 2^{k \log_2 d_1}$ --- cf. \eqref{sigma} --- and choosing $m \in \N$ large enough and $d\geq \log_2 d_1$ in \eqref{supdif}, we get that
\begin{eqnarray}
\label{finiteterm}
\lefteqn{\int_{\Rn} \Big( \sum_{k=1}^\infty \sigma_k^{q} |{\cal F}^{-1}(\varphi_k {\cal F}(\psi W)) -b_k \psi e^{i2^kx_1}|^q \Big)^{p/q}\, dx} \nonumber \\
& \leq & \int_{\Rn} (1+|x|)^{-mp}\, dx \\
& & \!\!\!\!\!\!\!\! \times \Big(\sup_{x \in \Rn} (1+|x|)^{mq} \sum_{k=1}^\infty 2^{k(\log_2d_1)q} |{\cal F}^{-1}(\varphi_k {\cal F}(\psi W)) -b_k \psi e^{i2^kx_1}|^q  \Big)^{p/q} \, < \, \infty \,. \nonumber
\end{eqnarray}
The counterpart of \eqref{quasitri} is now
\begin{eqnarray*}
\lefteqn{\Big\| \Big( \sum_{k=1}^\infty \sigma_k^q  | b_k \psi e^{i2^kx_1} |^q \Big)^{1/q}  | L_p \Big\| }  \\
& \leq & c\, \Big\| \Big( \sum_{k=1}^\infty \sigma_k^q | {\cal F}^{-1}(\varphi_k {\cal F}(\psi W)) |^q \Big)^{1/q} | L_p \Big\|   \\
&   &  + \;  c\, \Big\| \Big( \sum_{k=1}^\infty \sigma_k^q  | {\cal F}^{-1}(\varphi_k {\cal F}(\psi W)) -b_k \psi e^{i2^kx_1} |^q  \Big)^{1/q} | L_p \Big\|,
\end{eqnarray*}
and, again, a corresponding estimation obtained by interchanging the roles of $b_k \psi e^{i2^kx_1}$ and ${\cal F}^{-1}(\varphi_k {\cal F}(\psi W))$ also holds. Therefore, taking \eqref{finiteterm} into account,
\begin{eqnarray*}
\Big\| \Big( \sum_{k=1}^\infty \sigma_k^q | {\cal F}^{-1}(\varphi_k {\cal F}(\psi W)) |^q \Big)^{1/q} | L_p \Big\|  &  & \mbox{is }\, \mbox{finite} \end{eqnarray*}

\noindent if, and only if,
\begin{eqnarray*}
\Big\| \Big( \sum_{k=1}^\infty \sigma_k^q  | b_k \psi e^{i2^kx_1} |^q \Big)^{1/q}  | L_p \Big\|  & & \mbox{is }\, \mbox{finite}.
\end{eqnarray*}
That is, and after simplifying the last expression (taking also into consideration the hypothesis $\psi \in {\cal S} \setminus \{ 0 \}$),
$$
\psi W \in F^\sigma_{p,q}  \quad \mbox{if, and only if,} \quad  (\sigma_k b_k)_{k \in \N} \in \ell_q.
$$

We have been assuming, in this case of $F$-spaces, that both $p$ and $q$ are finite. However, with the usual modifications the preceding arguments also work out for $q=\infty$.
\end{proof}


\section{Main results}
\label{Main}


We start by considering a reverse H\"older's inequality result which will be used as a backbone for the proof of the necessity of most of the conditions in Theorems \ref{1_nec_suf_B} and \ref{1_nec_suf_F} below.



\begin{prop}
\label{Inverse Hoelder}
Let $0 < r \leq \infty$ and $(a_j)_{j \in \N}$, $(b_j)_{j \in \N} \subset \C$. If  $(a_j b_j)_{j \in \N}$ belongs to $ \ell_1$ for all sequences $(b_j)_{j \in \N}$ belonging to $ \ell_{r}$, then $(a_j)_{j \in \N} \in \ell_{r'}$.
\end{prop}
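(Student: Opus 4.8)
The plan is to argue by contraposition: assuming $(a_j)_{j \in \N} \notin \ell_{r'}$, I would construct an explicit sequence $(b_j)_{j \in \N} \in \ell_r$ for which $\sum_j |a_j b_j| = \infty$, contradicting the hypothesis. Since the phases of the $b_j$ play no role in $\sum_j |a_j b_j|$, I may take all $b_j \ge 0$, so that a choice of the \emph{moduli} $|b_j|$ alone produces the contradiction. The argument splits according to $r$, the three regimes being $r = \infty$ (so $r' = 1$), $0 < r \le 1$ (so $r' = \infty$), and the main case $1 < r < \infty$.

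The two extreme cases are immediate. If $r = \infty$, then taking $b_j = 1$ for all $j$, which lies in $\ell_\infty = \ell_r$, the hypothesis gives directly $(a_j) = (a_j \cdot 1) \in \ell_1 = \ell_{r'}$, with no contradiction even needed. If $0 < r \le 1$ and $(a_j)$ were unbounded (i.e.\ $(a_j) \notin \ell_\infty = \ell_{r'}$), I would extract a subsequence $(a_{j_k})_k$ with $|a_{j_k}| \ge 2^{k/r}$ and set $b_{j_k} := 1/|a_{j_k}|$, all other entries being zero; then $\sum_k |b_{j_k}|^r = \sum_k |a_{j_k}|^{-r} \le \sum_k 2^{-k} < \infty$ while $\sum_k |a_{j_k}|\, |b_{j_k}| = \sum_k 1 = \infty$.

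The heart of the matter is $1 < r < \infty$, where $r' - 1 = r'/r$ and the naive dual choice $|b_j| = |a_j|^{r'-1}$ fails, because it makes both $\sum_j |a_j b_j| = \sum_j |a_j|^{r'}$ and $\sum_j |b_j|^{r} = \sum_j |a_j|^{r'}$ diverge simultaneously. The fix is a gliding-hump / blocking argument. Assuming $\sum_j |a_j|^{r'} = \infty$, I would choose $0 = n_0 < n_1 < n_2 < \cdots$ so that each block sum $A_k := \sum_{n_{k-1} < j \le n_k} |a_j|^{r'}$ satisfies $A_k \ge 1$, and then damp the natural candidate by a factor $\lambda_k := (k A_k)^{-1}$, setting $|b_j| := \lambda_k\, |a_j|^{r'-1}$ for $j$ in the $k$-th block. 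Then $\sum_j |a_j b_j| = \sum_k \lambda_k A_k = \sum_k k^{-1} = \infty$, whereas $\sum_j |b_j|^{r} = \sum_k \lambda_k^{r} A_k = \sum_k k^{-r} A_k^{1-r} \le \sum_k k^{-r} < \infty$, using $A_k \ge 1$ together with $1-r < 0$ and $r>1$. Hence $(b_j) \in \ell_r$ yet $(a_j b_j) \notin \ell_1$, the desired contradiction.

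I expect the main obstacle to be precisely this balancing in the case $1 < r < \infty$: one must split the divergent series $\sum_j |a_j|^{r'}$ into blocks and attach weights $\lambda_k$ so that the $\ell_1$-side stays divergent while the $\ell_r$-side becomes convergent, and it is the choice $\lambda_k = (k A_k)^{-1}$ that makes the two sides behave like $\sum_k k^{-1}$ and $\sum_k k^{-r}$ respectively. A conceptually shorter alternative, which I would at least mention, is to view $b \mapsto (a_j b_j)$ as a closed, hence (by the closed graph theorem in the quasi-Banach setting) bounded, linear map $\ell_r \to \ell_1$, and then to recover $\|(a_j)\|_{r'}$ as the supremum of $\sum_{j \le M} |a_j b_j| / \|b\|_r$ through the sharpness of H\"older's inequality on finite truncations; the direct blocking construction above is, however, self-contained and avoids invoking the closed graph theorem for quasi-Banach spaces.
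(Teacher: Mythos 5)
Your proof is correct, and for the two extreme cases it essentially coincides with the paper's argument: for $r=\infty$ the paper also just takes $b_j=1$, and for $0<r\le 1$ the paper performs the same subsequence extraction, choosing $|a_{j_l}|\ge l^{1/r+1}$ and $b_{j_l}=l^{-1/r-1}$ where you use $2^{k/r}$ and $b_{j_k}=1/|a_{j_k}|$ --- a cosmetic difference. The genuine divergence is in the main case $1<r<\infty$: the paper does not prove it at all, but simply cites Theorem 161 of Hardy--Littlewood--P\'olya, whereas you give a self-contained gliding-hump argument. Your blocking construction checks out: with $A_k\ge 1$ and $\lambda_k=(kA_k)^{-1}$ one gets $\sum_{j}|a_jb_j|=\sum_k k^{-1}=\infty$ while $\sum_j|b_j|^r=\sum_k k^{-r}A_k^{1-r}\le\sum_k k^{-r}<\infty$, using $(r'-1)r=r'$, $A_k^{1-r}\le 1$ and $r>1$; and the blocks can always be formed since the partial sums of $\sum_j|a_j|^{r'}$ tend to infinity. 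What your route buys is a fully elementary, reference-free proof covering all three regimes uniformly by contraposition; what the paper's route buys is brevity, delegating the classical case to the literature and only writing out the quasi-Banach range $0<r\le 1$ where no standard reference applies. Your closing remark about the closed graph theorem is a reasonable aside but is correctly flagged as unnecessary.
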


The case $1<r<\infty$ is contained in  \cite[The. 161, p. 120]{HLP}. The case $r=\infty$ is trivial (just take all $b_j$'s equal to 1), though something stronger is true, namely the conclusion still holds merely by drawing $(b_j)_{j \in \N}$ from $c_0$, as follows from \cite[The. 162(i), pp. 120-121]{HLP}. Finally, the case $0<r \leq 1$ (then $r'=\infty$) can be proved by contradiction. Indeed, assume  $(a_j)_{j \in \N} \not\in \ell_{\infty}$. Then for each natural number $l$ there exists an index $j_l > j_{l-1}$ such that $|a_{j_l}| \ge l^{\frac{1}{r}+1}$, where $j_0$ can, e.g., be taken equal to 1. Define
$$ b_j :=\left\{\begin{array}{lcr}
l^{-\frac{1}{r}-1} &\mbox{ if } &j=j_l~~~\\
                       0  &&\mbox{otherwise~~.}
                       \end{array}\right.  $$
Then  $(b_j)_{j \in \N} \in \ell_{r}$  but     $\sum_{j=1}^\infty |a_j| b_j = \infty$.

\bigskip
To prove the necessity of some conditions in the next theorem we will construct so-called extremal functions starting from a smooth basic function $\Phi$ with compact support and vanishing moment conditions, which we describe next:

\begin{prop} \label{basic function} For every $L\in\N$ and $\lambda_0 > 1$ there exist a $C^\infty$-function $\Phi$ on $\R^n$ and suitable positive constants $C_1\,,\,C_2\,$ and $C_3$, these constants depending only on $\lambda_0$ and $n$, such that $ C_1 < C_3 < \lambda_0 C_1~,$
$$ \Phi(x) \ge C_2 ~~\mbox{if}~~|x|_\infty \le C_1~,~~ \Phi(x) = 0 ~~\mbox{if}~~|x|_\infty \ge C_3~$$
and
$$ \int x^\gamma \Phi(x)~dx ~=~0 ~~\mbox{whenever}~~\gamma \in \N_0^n ~~\mbox{and}~~|\gamma|_\infty \le L~.$$
\end{prop}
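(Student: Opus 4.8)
The plan is to reduce the $n$-dimensional construction to a one-dimensional one by a tensor product, exploiting the fact that the required moment conditions are stated in terms of $|\gamma|_\infty$ rather than $|\gamma|_1$. First I would fix the support constants using only $\lambda_0$: since $\lambda_0>1$ the interval $(1,\lambda_0)$ is nonempty, so I may set $C_1:=1$ and $C_3:=(1+\lambda_0)/2$, which indeed satisfy $C_1<C_3<\lambda_0 C_1$. The lower-bound constant $C_2$ will be pinned down only at the very end, where it will turn out that $C_2=1$ works.

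Next I would build a one-dimensional profile. I look for $\phi\in C_0^\infty(\R)$ with $\phi\equiv 1$ on $[-C_1,C_1]$, $\supp \phi\subset[-C_3,C_3]$, and $\int_\R t^k\phi(t)\,dt=0$ for $k=0,1,\ldots,L$; then I set $\Phi(x):=\prod_{i=1}^n\phi(x_i)$. With this choice $\Phi\ge 1$ on $\{|x|_\infty\le C_1\}$ (each factor equals $1$), $\Phi=0$ on $\{|x|_\infty\ge C_3\}$ (some factor vanishes), and for any $\gamma\in\No^n$ with $|\gamma|_\infty\le L$ one gets, by Fubini, $\int_{\Rn}x^\gamma\Phi(x)\,dx=\prod_{i=1}^n\int_\R t^{\gamma_i}\phi(t)\,dt=0$, since each $\gamma_i\le L$. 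Thus the whole statement follows once $\phi$ is produced, with $C_2=1$.

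To produce $\phi$, I start from any fixed $\psi_0\in C_0^\infty(\R)$ with $\psi_0\equiv 1$ on $[-C_1,C_1]$ and $\supp \psi_0\subset[-C_3,C_3]$, and seek a correction $\chi\in C_0^\infty(\R)$ supported in the annulus $\{C_1<|t|<C_3\}$ with $\int_\R t^k\chi(t)\,dt=\int_\R t^k\psi_0(t)\,dt$ for $k=0,\ldots,L$; then $\phi:=\psi_0-\chi$ has the desired vanishing moments and, as $\chi$ vanishes on $[-C_1,C_1]$, still equals $1$ there. Existence of such a $\chi$ is a finite-dimensional solvability issue: the moment map $\chi\mapsto(\int_\R t^k\chi\,dt)_{k=0}^{L}$ from functions supported in the (nonempty) annulus to $\R^{L+1}$ is onto, because a nonzero polynomial of degree $\le L$ cannot vanish identically on the annulus. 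Concretely, I would pick $L+1$ bumps localized at distinct points of the annulus, whose moment matrix is a nonsingular Vandermonde-type matrix, and solve the resulting linear system for the coefficients.

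The point that needs care, and the reason for confining the correction to the annulus, is the assertion that $C_1$, $C_2$, $C_3$ depend only on $\lambda_0$ and $n$. Although $\phi$ (hence $\Phi$) depends on $L$ through $\chi$, the correction never touches the central cube, so $\phi\equiv 1$ on $[-C_1,C_1]$ for every $L$ and the bound $C_2=1$ is uniform in $L$, while $C_1$ and $C_3$ were fixed from $\lambda_0$ alone. This uniformity in $L$ is the only genuinely delicate aspect of the proof, and it is precisely what the annulus-supported construction secures; the solvability of the moment system, by contrast, is routine and affects only $\Phi$, not the three constants.
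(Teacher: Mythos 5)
Your proof is correct and complete. Note, however, that the paper does not actually prove Proposition \ref{basic function}: it only cites a construction from \cite[Lem.\ 4.6]{CaFa04}, so there is no in-paper argument to compare yours against line by line. Judged on its own terms, your route is clean and elementary: the reduction to dimension one via the tensor product $\Phi(x)=\prod_i\phi(x_i)$ is legitimate precisely because the moment conditions are imposed for $|\gamma|_\infty\le L$ (so each exponent $\gamma_i$ is at most $L$ and every one-dimensional factor $\int t^{\gamma_i}\phi(t)\,dt$ vanishes), and the choice $C_1=1$, $C_3=(1+\lambda_0)/2$, $C_2=1$ visibly satisfies $C_1<C_3<\lambda_0C_1$ and depends only on $\lambda_0$. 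The existence of the annulus-supported correction $\chi$ is also sound: the duality argument (a nonzero polynomial of degree $\le L$ cannot vanish on a nonempty open set) shows the moment map is onto $\R^{L+1}$, and confining $\chi$ to $\{C_1<|t|<C_3\}$ is exactly what keeps $\phi\equiv 1$ on the central interval, hence $C_2$ independent of $L$ — which is the property the paper actually needs when it rescales $\Phi$ and integrates over the passages $P_m$. Two cosmetic points: it is safer to take $\supp\psi_0$ and $\supp\chi$ inside the \emph{open} interval $(-C_3,C_3)$ so that $\phi$ vanishes at $|t|=C_3$ without appeal to continuity; and the moment matrix of genuine bumps is only approximately Vandermonde, so the concrete variant needs a small-support perturbation argument — but your abstract surjectivity argument already settles existence, so nothing is missing.
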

A construction of such functions was described  in   \cite[Lem. 4.6]{CaFa04}.

\begin{theo}
\label{1_nec_suf_B}
Let $0<p, q \leq \infty$. Let $N$ and $\sigma$ be admissible sequences with $N$ satisfying also Assumption \ref{assump}.
The following are necessary and sufficient conditions for $\; B^{\sigma,N}_{p,q} \subset L_1^{\rm loc}$, where $\ell_{\frac{p\infty}{\infty-p}}$ should be understood as $\ell_p$:
\\[-1mm]
\begin{enumerate}
    \item $(\sigma_j^{-1} N_j^{n(\frac{1}{p}-1)})_{j \in \No} \in \ell_{q'}$, \quad in case $\; 0 < p \leq 1\;$ and $\; 0<q \leq \infty$;
    \item $(\sigma_j^{-1})_{j \in \No} \in \ell_\infty$, \quad in case $\; 1 < p \leq \infty\;$ and $\; 0 < q \leq \min\{ p,2 \}$;
    \item $(\sigma_j^{-1})_{j \in \No} \in \ell_{\frac{pq}{q-p}}$, \quad in case $\; 1< p \leq 2\;$ and $\; \min\{ p,2 \} < q \leq \infty$;
    \item $(\sigma_j^{-1})_{j \in \No} \in \ell_{\frac{2q}{q-2}}$, \quad in case $\; 2 < p \leq \infty\;$ and $\; \min\{ p,2 \} < q \leq \infty$.
\end{enumerate}

\end{theo}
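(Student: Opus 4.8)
The plan is to treat the four cases by combining the sufficiency results already at hand with extremal-function constructions establishing necessity. For the \emph{sufficiency} direction, cases 1 and 2 follow almost immediately from the known embeddings: in case 1 the condition $(\sigma_j^{-1} N_j^{n(1/p-1)})_j \in \ell_{q'}$ is precisely hypothesis \eqref{CF-eq2} of Proposition \ref{BinLpbar} (with $0<p\le 1$ so that $(1/p-1)_+ = 1/p-1$), hence $B^{\sigma,N}_{p,q} \hookrightarrow L_{\max\{1,p\}} = L_1$ and \emph{a fortiori} $B^{\sigma,N}_{p,q} \subset L_1^{\rm loc}$ by Remark \ref{BinL1loc}. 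For cases 3 and 4 with $1<p<\infty$, I would use standardization (Theorem \ref{standard}) to reduce to the classical sequence $N_j = 2^j$, where the conditions read as summability statements on $\sigma^{-1}$; Proposition \ref{sigma-sigmak(j)} guarantees that the passage between $\sigma^{-1}$ and $(\sigma_{k(j)}^{-1})_j$ preserves the relevant $\ell_r$ membership, so the sufficiency is inherited from the corresponding classical statement in Theorem \ref{classical}(2) via the embeddings of Proposition \ref{CF-t1}. The boundary subcase $q = \min\{p,2\}$ belongs to case 2, and there $(\sigma_j^{-1})_j \in \ell_\infty$ together with Remark \ref{estimations sigma} suffices.

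For the \emph{necessity} direction, which is the heart of the matter, the strategy bifurcates. In most cases I would argue by contradiction using extremal functions: assuming the stated $\ell_r$-condition fails, I build an explicit $f \in B^{\sigma,N}_{p,q}$ that is \emph{not} a regular distribution. The natural vehicle is the atomic decomposition theorem (Theorem \ref{theo-23} with conditions \eqref{33-4-new}--\eqref{33-5-bis-new}), feeding it coefficients $\lambda = (\lambda_{\nu m})$ built from a sequence that lies in $b_{p,q}$ but whose associated function fails local integrability; the basic atom is produced by Proposition \ref{basic function}, whose vanishing-moment property is what forces singularity. Here the reverse Hölder inequality, Proposition \ref{Inverse Hoelder}, is the backbone: local integrability of $f$ tested against a fixed probe translates into an $\ell_1$-pairing condition on the coefficient sequence, and Proposition \ref{Inverse Hoelder} then converts ``the pairing is finite for all admissible coefficient sequences'' into the required $\ell_{r'}$-membership of $\sigma^{-1}$ (suitably weighted), i.e. exactly the claimed condition with the conjugate index. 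This mechanism cleanly produces the exponents $q'$, $\frac{pq}{q-p}$ and $\frac{2q}{q-2}$ appearing in cases 1, 3 and 4.

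The delicate cases are the \emph{boundary} lines where a comparison of $p$, $q$ and $2$ is genuinely at stake, most notably case 2 at $q = \min\{p,2\}$ and the thresholds separating cases 2, 3 and 4. For these I would not use atoms but rather the lacunary Fourier series machinery: Theorem \ref{functionseq} shows that for $\psi \in {\cal S}\setminus\{0\}$ and $W = \sum_j b_j e^{i2^j x_1}$ one has $\psi W \in B^\sigma_{p,q}$ if and only if $(\sigma_k b_k)_k \in \ell_q$, while Proposition \ref{Kah} (Kahane's theorem on lacunary series) asserts that local integrability of such a lacunary series forces $(b_k)_k \in \ell_2$. Confronting these two facts, a failure of the stated condition lets me choose $(\sigma_k b_k)_k \in \ell_q$ (so $\psi W$ lies in the space) with $(b_k)_k \notin \ell_2$ (so $\psi W \notin L_1^{\rm loc}$), producing the sought singular element; standardization again removes the restriction $N_j = 2^j$. \textbf{The main obstacle} I anticipate is precisely the bookkeeping at these boundary exponents: verifying that the constraint $(\sigma_k b_k)_k \in \ell_q$ is \emph{compatible} with $(b_k)_k \notin \ell_2$ exactly when the claimed $\ell_r$-condition on $\sigma^{-1}$ fails, for the correct $r \in \{2q/(q-2),\, pq/(q-p),\, \infty\}$, requires a careful Hölder/duality computation matching the interplay of $p$, $q$ and $2$ — this is where the ``surprising neatness'' of the final indices has to be earned, and where Proposition \ref{Inverse Hoelder} is applied with $r$ chosen so that $r'$ is the target exponent.
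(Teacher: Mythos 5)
Your overall toolkit is the right one, and your plans for sufficiency (Proposition \ref{BinLpbar} in case 1; Proposition \ref{CF-t1}, standardization via Corollary \ref{level zero}, and Theorem \ref{classical} in cases 2--4) and for the necessity in cases 1--3 (atoms built from Proposition \ref{basic function}, the local-integrability pairing, and Proposition \ref{Inverse Hoelder}) match the paper's proof in substance. But there is a genuine gap in where you deploy the two necessity mechanisms. You claim the atomic/reverse-H\"older argument ``cleanly produces the exponents $q'$, $\frac{pq}{q-p}$ and $\frac{2q}{q-2}$ appearing in cases 1, 3 and 4.'' It cannot produce $\frac{2q}{q-2}$. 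The counting argument with many disjoint atoms spread over shrinking rectangles yields, after testing local integrability, a pairing of $\sigma^{-1}$ against $\ell_r$ with $\frac{1}{r}=1-\frac{1}{p}+\frac{1}{q}$, hence via Proposition \ref{Inverse Hoelder} only $(\sigma_j^{-1})_j\in\ell_{r'}$ with $\frac{1}{r'}=\frac{1}{p}-\frac{1}{q}$, i.e.\ $\ell_{\frac{pq}{q-p}}$. For $2<p\le\infty$ this is strictly weaker than the claimed $\ell_{\frac{2q}{q-2}}$ (since $\ell_{\frac{2q}{q-2}}\subsetneq\ell_{\frac{pq}{q-p}}$); the exponent $2$ is an $L_2$-type phenomenon invisible to atom counting. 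The lacunary Fourier series machinery (Theorem \ref{functionseq}, Proposition \ref{Kah}, standardization via Theorem \ref{standard} and Proposition \ref{sigma-sigmak(j)}) is needed for the \emph{whole} of case 4, not merely at thresholds: one shows that $(\beta_kb_k)_k\in\ell_q$ forces $(b_k)_k\in\ell_2$, then chooses $b_j=|\gamma_j|^{1/2}\beta_j^{-1}$ for arbitrary $(\gamma_j)_j\in\ell_{q/2}$ and applies Proposition \ref{Inverse Hoelder} with index $q/2$ to get $(\beta_k^{-2})_k\in\ell_{(q/2)'}$, i.e.\ $(\beta_k^{-1})_k\in\ell_{\frac{2q}{q-2}}$.

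Conversely, you reserve the lacunary argument for ``case 2 at $q=\min\{p,2\}$ and the thresholds separating cases 2, 3 and 4,'' but case 2 in its entirety (including the boundary $q=\min\{p,2\}$) falls out of the same atomic construction as case 3: when $q\le p$ the index $r$ above satisfies $0<r\le 1$, so $r'=\infty$ and Proposition \ref{Inverse Hoelder} gives $(\sigma_j^{-1})_j\in\ell_\infty$ directly; no Fourier-series input is required there. One further point worth making explicit: case 1 needs a \emph{different} atomic construction from cases 2--3 (concentric atoms $|\rho_j|\sigma_j^{-1}N_j^{n/p}\Phi(N_jx)$ located at $Q_{j0}$, integrated over the annular passages where exactly the first $m$ atoms survive), since that is where the factor $N_j^{n(\frac1p-1)}$ is generated; your sketch does not distinguish the two constructions, but that is a matter of detail rather than a flaw. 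The misallocation in case 4 is the real gap: as written, your plan would establish only $(\sigma_j^{-1})_j\in\ell_{\frac{pq}{q-p}}$ there, which does not prove the theorem.
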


\begin{proof}

(i) First we prove the sufficiency of the given conditions in each case.

In case 1, it follows directly from Remark \ref{BinL1loc}.

For each one of the remaining cases we use, in sequence, Proposition \ref{CF-t1}, Corollary \ref{level zero} and Theorem \ref{classical}. This explains why we can write, assuming the condition in each one of the cases, that in case 2

$$B^{\sigma,N}_{p,q} \hookrightarrow B^{(1),N}_{p,q} = B^0_{p,q} \subset L_1^{\rm loc},$$

\noindent in case 3

$$B^{\sigma,N}_{p,q} \hookrightarrow B^{(1),N}_{p,p} = B^0_{p,p} \subset L_1^{\rm loc}$$

\noindent and in case 4

$$B^{\sigma,N}_{p,q} \hookrightarrow B^{(1),N}_{p,2} = B^0_{p,2} \subset L_1^{\rm loc}.$$

\bigskip
(ii) Here we prove the necessity of the condition sated in case 1.

Let $L$ be chosen in dependency of $(N_j)_{j\in \No}$ and $(\sigma_j)_{j\in \No}$ by (\ref{33-5-bis}) or (\ref{33-5-bis-new}), respectively, and let $\Phi$ be a corresponding basic function depending on $L$, $n$ and $\lambda_0$ from Proposition \ref{basic function}.
Let $(\rho_j)_{j\in\N}$ be a sequence belonging to $\ell_q$ and
\begin{equation}
f^\rho(x) := \sum_{j=1}^\infty  |\rho_j| \sigma_j^{-1} N_j^{n/p} \Phi(N_jx),
\label{frho}
\end{equation}
convergence in  ${\cal S}'$.
For $x\not=0$ this is always a finite sum and  for each $j$ the functions $\sigma_j^{-1}N_j^{n/p}\Phi(N_j x)$ are $(\sigma,p)_{M,L}$-$N$-atoms located at $Q_{j0}$ in the sense of Definition \ref{def-21} and  Theorem \ref{theo-23}.
Then $f^\rho$ belongs to $\; B^{\sigma,N}_{p,q}\;$ and
$$ ||f^\rho|B^{\sigma,N}_{p,q}|| ~\le ~ c\, \| (\rho_j)_{j\in \N} | \ell_q \| .$$
Now we assume $\; B^{\sigma,N}_{p,q} \subset L_1^{\rm loc}$. Then
$$   \int_{|x|_\infty \le C_1 N_1^{-1}} |f^\rho(x)| \,dx   ~< ~\infty~~$$
and, actually, $f^\rho$ will also be given by \eqref{frho} in the pointwise sense a.e..
We will split part of the set $\{x: \,|x|_\infty \le C_1 N_1^{-1}\}$ in a non-overlapping way to obtain simple passages
$$P_m := \{x:\, C_3\lambda_0^{-1}N_m^{-1} \le |x|_\infty \le C_1N_m^{-1}\}\, ,$$
because on these passages we have
$$ \Phi(N_jx) \ge C_2 ~~~\mbox{if}~~~j\le m $$
and
$$ \Phi(N_jx) = 0 ~~~\mbox{if}~~~j > m ~~.$$

%

For each $k\in \N$ we have

\begin{eqnarray*}
\infty & > &  \int_{|x|_\infty \le C_1 N_1^{-1}} |f^\rho(x)| \,dx  \\
&\geq& \int_{C_3\lambda_0^{-1}N^{-1}_k \le |x|_\infty \le C_1 N_1^{-1}} \Big| \sum_{j=1}^\infty  |\rho_j| \sigma_j^{-1} N_j^{n/p} \Phi(N_jx) \Big| \,dx \\[2mm]
&\geq& \sum_{m=1}^{k}   \int_{C_3\lambda_0^{-1}N^{-1}_m \le |x|_\infty \le C_1 N_m^{-1}}  \Big| \sum_{j=1}^\infty  |\rho_j| \sigma_j^{-1} N_j^{n/p} \Phi(N_jx) \Big|\,dx \\[2mm]
&\geq& C_2 \sum_{m=1}^{k}  \int_{C_3\lambda_0^{-1}N^{-1}_m \le |x|_\infty \le C_1 N_m^{-1}}  \sum_{j=1}^m  |\rho_j| \sigma_j^{-1} N_j^{n/p} \,dx \\[2mm]
&\geq& C_2 \sum_{m=1}^k    |\rho_m| \sigma_m^{-1} N_m^{n/p} 2^n (C_1^n - C_3^n \lambda_0^{-n}) N_m^{-n} \\[2mm]
&=& c \sum_{m=1}^{k} |\rho_m| \sigma_m^{-1} N_m^{n(\frac{1}{p}-1)} ~~.
\end{eqnarray*}

The sum on the right-hand side is monotone increasing and the left-hand side is independent of $k$.
So we have
\begin{equation} \label{sigmaN}
\sum_{j=1}^{\infty}  |\rho_j| \sigma_j^{-1} N_j^{n(\frac{1}{p}-1)} ~< ~ \infty
\end{equation}
for any sequence   $(\rho_j)_{j\in\N} \in \ell_q$ if $\; B^{\sigma,N}_{p,q} \subset L_1^{\rm loc}$.
\\Now by  Proposition \ref{Inverse Hoelder} it follows

$$ (\sigma_j^{-1} N_j^{n(\frac{1}{p}-1)})_{j\in \N_0} \in \ell_{q'} ~~. $$

\bigskip
(iii) Now we prove the necessity of the conditions sated in cases 2 and 3.

\noindent
Let $(\gamma_j)_{j\in\N_0}$ be an arbitrary sequence 
belonging to $\ell_1$. 
 For technical reasons we consider now the sequence $(\tilde{\gamma}_j)_{j\in\N_0}$ with
\begin{equation} \label{gamma} \tilde{\gamma}_j := \max{(|\gamma_j|, 10^3 N_0^{-1}\lambda_0^{-j})} ~~~~~j = 0, 1,  \cdots ~~~.
\end{equation}
It is clear that $(\tilde{\gamma}_j)_{j\in\N_0}$ also belongs to $\ell_1$.
\\ Define
$$\kappa_0 := 0 ~~\mbox{and}~~\kappa_j:=\sum_{l=1}^j \tilde{\gamma}_l ~~ \mbox{for}~~j\in\N~~.$$
Then $\kappa_j > 0$ if $j\in \N$ and $\lim_{j\to \infty} \kappa_j = \kappa$, where $\kappa$ is equal to $||(\tilde{\gamma}_j)_{j\in\N_0}|\ell_1||$.
\\For all $j=1,2, \cdots$ put
$$R_j:=\{x=(x_1,x_2,...,x_n): \kappa_{j-1} < x_1 \le \kappa_j~,~0<x_i<1~~i=2,3,...n\} ~~
.$$
We obtain rectangles in $\R^n$ which become narrower in the $x_1$-direction.
Inside each $R_j$ we consider cubes $Q_{jm}$ of the type considered in the beginning of subsection \ref{atomicdec}. There exist  $M_j$ such cubes inside  $R_j$, centred in $N_j^{-1}m_r~,~j=1,\cdots,M_j$. Because of
$$ 10^3 N_j^{-1} \le 10^3 N_0^{-1} \lambda_0^{-j} \le  \tilde{\gamma}_j$$ and assuming, without loss of generality, that $N_0>2$, we have
$$M_j \sim N_j^{n-1} (\kappa_j-\kappa_{j-1})N_j = N_j^n\tilde{\gamma}_j~ ~.$$
In dependency of $(N_j)_{j\in \No}$ and $(\sigma_j)_{j\in \No}$ choose $L$ which fulfil (\ref{33-5-bis}) or (\ref{33-5-bis-new}), respectively.
Furthermore let $\Phi$ be a basic function depending on $L$, $n$ and $\lambda_0$ from Proposition \ref{basic function} and put $\tilde{\Phi}(x) := \Phi(2C_3 x)$.
 Let
\begin{equation}
h^\rho(x) := \sum_{j=1}^\infty \sum_{r=1}^{M_j} \rho_j \tilde{\Phi}(N_j(x-N_j^{-1}m_r))
\label{grhosum}
\end{equation}
(pointwise convergence) be a compactly supported function where  $(\rho_j)_{j\in\N}$ is an arbitrary sequence of non-negative numbers which will be specified later.
Notice that by construction for each $x\in \R^n$ in the double sum appears at most one summand which is not zero and that  $\sigma_j^{-1}N_j^{n/p}\tilde{\Phi}(N_j(x-N_j^{-1}m_r))$ are $(\sigma,p)_{M,L}$-$N$-atoms located at $Q_{jm_r}$ in the sense of Definition \ref{def-21} and  Theorem \ref{theo-23}.

If $(\rho^{(m)}_j\sigma_j N_j^{-n/p})_{j\in\No,m\in \Z^n} \in b_{p,q}$, where $\rho^{(m)}_j = \rho_j $ if $Q_{j,m} \subset R_j$ and $\rho^{(m)}_j = 0 $ otherwise, then the double sum in \eqref{grhosum} converges in ${\cal S}'$ to some $g^\rho$ which, by Theorem \ref{theo-23}, belongs to $B^{\sigma,N}_{p,q}$ and which, moreover, satisfies (assuming further that both $ p $ and $q$ are finite)

\begin{eqnarray*}||g^\rho|B^{\sigma,N}_{p,q}|| &\le & c \left(\sum_{j=1}^\infty \left(\sum_{r=1}^{M_j} |\rho_j \sigma_j N_j^{-\frac{n}{p}}|^p \right)^{q/p}\right)^{\frac{1}{q}} \\
& \sim & c \left(\sum_{j=1}^\infty  \rho_j^q \sigma_j^q N_j^{-\frac{nq}{p}} M_j^{\frac{q}{p}}\right)^{\frac{1}{q}}\\
& \sim & c \left(\sum_{j=1}^\infty  \rho_j^q \sigma_j^q N_j^{-\frac{nq}{p}}  N_j^{\frac{nq}{p}} \tilde{\gamma_j}^{\frac{q}{p}} \right)^{\frac{1}{q}}\\
& \sim & c \left(\sum_{j=1}^\infty  \rho_j^q \sigma_j^q  \tilde{\gamma}_j^{\frac{q}{p}} \right)^{\frac{1}{q}} ~~<~~\infty~~~.
\end{eqnarray*}
For each given sequence $(\gamma_j)_{j\in\N_0} \in \ell_1$ we choose
$$\rho_j^q := \sigma_j^{-q}  \tilde{\gamma}_j^{-\frac{q}{p} +1}$$
and obtain
$$ ||g^\rho|B^{\sigma,N}_{p,q}|| \le c ||(\tilde{\gamma}_j)_{j\in\N_0}|\ell_1||^{1/q} < \infty ~~~.$$
With this special choice of $(\rho_j)_{j\in\N}$ we also have
\begin{eqnarray*}
\int_{[0,\kappa]\times[0,1]^{n-1}} |h^\rho(x)| \,dx &=& \sum_{j=1}^\infty \int_{R_j}|h^\rho(x)| \,dx\\
&=&\sum_{j=1}^\infty \sigma_j^{-1} \tilde{\gamma}_j^{-\frac{1}{p}+\frac{1}{q}} \sum_{r=1}^{M_j} \int_{Q_{j,m_r}} |\tilde{\Phi}(N_j(x-N_j^{-1}m_r)|\,dx \\
&\sim& \sum_{j=1}^\infty \sigma_j^{-1} \tilde{\gamma}_j^{-\frac{1}{p}+\frac{1}{q}} N_j^{-n} M_j\\
&\sim&  \sum_{j=1}^\infty \sigma_j^{-1} \tilde{\gamma}_j^{-\frac{1}{p}+\frac{1}{q}}  N_j^{-n} N_j^{n} \tilde{\gamma}_j\\
&\sim&  \sum_{j=1}^\infty \sigma_j^{-1} \tilde{\gamma}_j^{1-\frac{1}{p}+\frac{1}{q}} ~~~,
\end{eqnarray*}
where the equivalence constants might depend on $\Phi$.
Now we assume $\; B^{\sigma,N}_{p,q} \subset L_1^{\rm loc}$. Then $h^\rho$ and $g^\rho$ coincide a.e. and for every sequence $(\gamma_j)_{j\in\N_0} \in \ell_1$
$$\sum_{j=1}^\infty \sigma_j^{-1} \tilde{\gamma}_j^{1-\frac{1}{p}+\frac{1}{q}} \sim \int_{[0,\kappa]\times[0,1]^n} |g^\rho(x)| \,dx  < \infty ~~.$$
Moreover by (\ref{gamma})  $$ \sum_{j=1}^\infty \sigma_j^{-1} |{\gamma}_j|^{1-\frac{1}{p}+\frac{1}{q}}
\le \sum_{j=1}^\infty \sigma_j^{-1} \tilde{\gamma}_j^{1-\frac{1}{p}+\frac{1}{q}} $$
whenever $1-\frac{1}{p}+\frac{1}{q} > 0$. But this is the case if $1 < p$.

Therefore $\; B^{\sigma,N}_{p,q} \subset L_1^{\rm loc}$ implies $\sum_{j=1}^\infty \sigma_j^{-1} |\gamma_j|^{1-\frac{1}{p}+\frac{1}{q}} < \infty$ for all sequences $(\gamma_j)_{j\in\N_0} \in \ell_1$.
But this is equivalent to  $\sum_{j=0}^\infty \sigma_j^{-1} |\beta_j| < \infty$ for all sequences $(\beta_j)_{j\in\N_0} \in \ell_r$, where $\frac{1}{r} = 1-\frac{1}{p}+\frac{1}{q} > 0 $.
 Then it
follows $(\sigma_j^{-1})_{j\in\N_0} \in \ell_{r'}$ by Proposition \ref{Inverse Hoelder}.
In case $ 1<r<\infty $ (this is equivalent to $p<q$)  we get $\frac{1}{r'} = \frac{1}{p}-\frac{1}{q}$  and in case $0<r\le 1$ (this is equivalent to $q \le p$) we have $r'=\infty$.
Consequently we obtain that $\; B^{\sigma,N}_{p,q} \subset L_1^{\rm loc}$ implies
\begin{eqnarray*}  (\sigma_j^{-1})_{j\in\N_0} \in \ell_{\frac{pq}{q-p}} &\mbox{if}& 1 <  p < q < \infty\, ,\\
(\sigma_j^{-1})_{j\in\N_0} \in \ell_{\infty}~~ &\mbox{if}& 1<p< \infty ~~\mbox{and}~~0 < q \le p~.
\end{eqnarray*}
Adapting the above arguments to the cases where $p$ or $q$ are infinite, we get the same conclusions as long as we interpret $\ell_{\frac{p\infty}{\infty-p}}$ as $\ell_p$.

(iv) Finally we prove the necessity of the condition sated in case 4.

\noindent
Let $\; B^{\sigma,N}_{p,q}$ be given. Then by Theorem \ref{standard} we find a sequence $(\beta_j)_{j\in\N_0}:= (\sigma_{k(j)})_{j\in\N_0}$ determined by (\ref{beta})  with    $B^{\sigma,N}_{p,q} = B^\beta_{p,q}$.
Furthermore by Theorem
\ref{functionseq} we can construct for each sequence
$(b_j)_{j \in \N} \subset \C$ with $|b_j| \leq c_r 2^{jr}$ (for some $r>0$) a distribution
$$
W(x_1,\ldots,x_n) := \sum_{j=1}^\infty b_j e^{i2^jx_1}
\vspace{-3mm}
$$
such that
$$\psi W \in B^\beta_{p,q} \; \Leftrightarrow \; (\beta_k b_k)_{k \in \N} \in \ell_q\, , \quad \mbox{for any given }\, \psi \in {\cal S} \setminus \{ 0 \} ~.$$
If we assume $\; B^{\sigma,N}_{p,q} = B^{\beta}_{p,q}\subset L_1^{\rm loc}$, then it follows $\psi W \in L_1^{\rm loc}(\R^n)$ whenever $(\beta_k b_k)_{k \in \N} \in \ell_q$. With a choice of $\psi$ different from 0 everywhere, then also $W \in L_1^{\rm loc}(\R^n)$ and, consequently, the one variable version $w$ (that is, $w(t) := \sum_{j=1}^\infty b_j e^{i2^jt}$) is locally integrable too. In particular, $\sum_{j=1}^\infty b_j e^{i2^jt}$ is the Fourier series of a function in $L_1([0,2\pi])$ and by Proposition \ref{Kah} it follows $ (b_j)_{j \in \N} \in \ell_2$.

Since the assumption $(\beta_k b_k)_{k \in \N} \in \ell_q$ implies that $|b_j| \leq c_r 2^{jr}$ for some $r>0$, then we have shown that $ (b_j)_{j \in \N} \in \ell_2$ for all sequences $(b_k)_{k\in \N} \subset \C$ such that $(\beta_k b_k)_{k \in \N} \in \ell_q$. Given any $ (\gamma_j)_{j \in \N} \in \ell_{\frac{q}{2}}$ and defining
$$ b_j := |\gamma_j|^{\frac{1}{2}} \beta_j^{-1}~~~,$$
the assumption $(\beta_k b_k)_{k \in \N} \in \ell_q$ is satisfied and therefore $(\beta_k^{-2} |\gamma_k|)_{k \in \N} \in \ell_1$. If $q>2$, then  again by Proposition \ref{Inverse Hoelder} we have
$(\beta_k^{-2} )_{k \in \N} \in \ell_{(\frac{q}{2})'}$, i.e., $(\beta_k^{-1} )_{k \in \N} \in \ell_{\frac{2q}{q-2}}$ (with the understanding that $\ell_{\frac{2\infty}{\infty-2}}$ should be read as $\ell_2$).
\\ Finally, by  Proposition \ref{sigma-sigmak(j)} we can transfer this to the original sequence $\sigma$ with arbitrary $\sigma_0>0$ and obtain $(\sigma_k^{-1} )_{k \in \N_0} \in \ell_{\frac{2q}{q-2}}$.

\end{proof}

\begin{theo}
\label{1_nec_suf_F}
Let $0<p< \infty$, $0< q \leq \infty$. Let $N$ and $\sigma$ be admissible sequences with $N$ satisfying also Assumption \ref{assump}. The following are necessary and sufficient conditions for $\; F^{\sigma,N}_{p,q} \subset L_1^{\rm loc}$, where $\ell_{\frac{2\infty}{\infty-2}}$ should be understood as $\ell_2$:
\\[-1mm]
\begin{enumerate}
    \item $(\sigma_j^{-1} N_j^{n(\frac{1}{p}-1)})_{j \in \No} \in \ell_{\infty}$, \quad in case $\; 0 < p < 1\;$ and $\; 0<q \leq \infty$;
    \item $(\sigma_j^{-1})_{j \in \No} \in \ell_\infty$, \quad in case $\; 1 \leq p < \infty\;$ and $\; 0 < q \leq 2 $;
    \item $(\sigma_j^{-1})_{j \in \No} \in \ell_{\frac{2q}{q-2}}$, \quad in case $\; 1 \leq p < \infty\;$ and $\; 2 < q \leq \infty$.
\end{enumerate}
\end{theo}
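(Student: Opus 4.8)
The plan is to prove sufficiency and necessity separately, leaning on the already established Besov counterpart (Theorem \ref{1_nec_suf_B}), on standardization, and — for the genuinely new difficulty in the last line — on the lacunary Fourier series machinery of Theorem \ref{functionseq}. For \textbf{sufficiency}, case~1 ($0<p<1$) is immediate, since the stated condition is exactly the hypothesis of Proposition \ref{sufcond1}. For cases~2 and~3 ($1\le p<\infty$) I would reduce to the classical level-zero space: applying Proposition \ref{Fcounterpart} with $\tau\equiv 1$, $q_1=q$, $q_2=2$ gives $1/q^{*}=(1/2-1/q)_+$, which is $0$ when $q\le 2$ and $1/2-1/q$ when $q>2$; hence the hypotheses $(\sigma_j^{-1})_j\in\ell_\infty$ (case~2) and $(\sigma_j^{-1})_j\in\ell_{2q/(q-2)}$ (case~3) are precisely what forces $F^{\sigma,N}_{p,q}\hookrightarrow F^{(1),N}_{p,2}$. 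By Corollary \ref{level zero} the target equals $F^0_{p,2}$, and $F^0_{p,2}\subset L_1^{\rm loc}$ for $1\le p<\infty$ by Theorem \ref{classical}. This closes sufficiency.

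For the \textbf{necessity in cases~1 and~2} I would avoid constructing new extremal functions and instead exploit the embedding $B^{\sigma,N}_{p,\min\{p,q\}}\hookrightarrow F^{\sigma,N}_{p,q}$ of Proposition \ref{BF}. If $F^{\sigma,N}_{p,q}\subset L_1^{\rm loc}$, then $B^{\sigma,N}_{p,\min\{p,q\}}\subset L_1^{\rm loc}$, and reading off the relevant line of Theorem \ref{1_nec_suf_B} with integrability $p$ and fine index $\min\{p,q\}$ yields exactly the required condition: in case~1 because $\min\{p,q\}\le p<1$ makes the conjugate index equal to $\infty$, so part~1 of Theorem \ref{1_nec_suf_B} gives $(\sigma_j^{-1}N_j^{n(1/p-1)})_j\in\ell_\infty$; in case~2 because $q\le 2$ implies $\min\{p,q\}\le\min\{p,2\}$, which lands in the $\ell_\infty$-line (case~1 there if $p=1$, case~2 there if $p>1$), producing $(\sigma_j^{-1})_j\in\ell_\infty$.

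The \textbf{necessity in case~3} ($1\le p<\infty$, $2<q\le\infty$) is the heart of the matter and mirrors part~(iv) of the Besov proof. By standardization (Theorem \ref{standard}) one has $F^{\sigma,N}_{p,q}=F^\beta_{p,q}$ with $\beta_j=\sigma_{k(j)}$. Fix a nowhere-vanishing $\psi\in{\cal S}\setminus\{0\}$; by Theorem \ref{functionseq}, for any $(b_j)_j$ with $|b_j|\le c_r 2^{jr}$ the series $W=\sum_j b_j e^{i2^jx_1}$ is a periodic tempered distribution and $\psi W\in F^\beta_{p,q}\Leftrightarrow(\beta_k b_k)_k\in\ell_q$. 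Assuming $F^{\sigma,N}_{p,q}=F^\beta_{p,q}\subset L_1^{\rm loc}$, every sequence with $(\beta_k b_k)_k\in\ell_q$ gives $\psi W\in L_1^{\rm loc}$, hence $W\in L_1^{\rm loc}$, and restricting to the first variable produces a locally integrable lacunary series whose coefficients must satisfy $(b_j)_j\in\ell_2$ by Proposition \ref{Kah}. Substituting $b_j=|\gamma_j|^{1/2}\beta_j^{-1}$ for arbitrary $(\gamma_j)_j\in\ell_{q/2}$ — which satisfies the growth bound because $\beta$ is admissible and makes $(\beta_kb_k)_k\in\ell_q$ — the conclusion $(b_j)_j\in\ell_2$ reads $\sum_j\beta_j^{-2}|\gamma_j|<\infty$ for all $(\gamma_j)_j\in\ell_{q/2}$. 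Proposition \ref{Inverse Hoelder} with $r=q/2$ then gives $(\beta_j^{-2})_j\in\ell_{(q/2)'}$, i.e.\ $(\beta_j^{-1})_j\in\ell_{2q/(q-2)}$ since $q>2$; Proposition \ref{sigma-sigmak(j)} finally transfers this back to $(\sigma_j^{-1})_j\in\ell_{2q/(q-2)}$.

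The \textbf{main obstacle} is exactly this necessity in case~3. Unlike in the Besov setting, separated-support atomic extremal functions detect only the coarse, $q$-independent condition $(\sigma_j^{-1})_j\in\ell_\infty$, because the pointwise $\ell_q$-summation inside the $F$-quasinorm collapses to a single term as soon as the building blocks have disjoint supports. To activate the sharp comparison between $q$ and $2$ one must superpose contributions from all dyadic frequency levels at each point, which is precisely what the lacunary series $W$ achieves. The two delicate links in the chain are the passage from local integrability to membership in $\ell_2$ via Paley's theorem (Proposition \ref{Kah}), which is what manufactures the exponent $2$, and the verification that admissibility of $\beta$ supplies the growth bound $|b_j|\le c_r 2^{jr}$ legitimizing the use of Theorem \ref{functionseq}; the reverse Hölder step then unifies the final reading of the index.
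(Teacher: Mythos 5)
Your proposal is correct and follows essentially the same route as the paper: Proposition \ref{sufcond1} respectively Proposition \ref{Fcounterpart} plus Corollary \ref{level zero} and Theorem \ref{classical} for sufficiency, the embedding $B^{\sigma,N}_{p,\min\{p,q\}}\hookrightarrow F^{\sigma,N}_{p,q}$ of Proposition \ref{BF} combined with Theorem \ref{1_nec_suf_B} for the necessity in cases 1 and 2, and standardization plus Theorem \ref{functionseq}, Proposition \ref{Kah}, Proposition \ref{Inverse Hoelder} and Proposition \ref{sigma-sigmak(j)} for the necessity in case 3 (which the paper handles by reference to part (iv) of the proof of Theorem \ref{1_nec_suf_B}, \emph{mutatis mutandis}). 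The only cosmetic deviations are that you land in $F^{0}_{p,2}$ rather than $F^{0}_{p,q}$ in the sufficiency of case 2 and that you explicitly single out the subcase $p=1$ in the necessity of case 2; both are harmless.
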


\begin{proof}

(i) First we prove the sufficiency of the given conditions in each case.

In case 1, it follows directly from Proposition \ref{sufcond1}.

For both remaining cases we use, in sequence, Proposition \ref{Fcounterpart}, Corollary \ref{level zero} and Theorem \ref{classical}. This explains why we can write, assuming the condition in each one of the cases, that in case 2

$$F^{\sigma,N}_{p,q} \hookrightarrow F^{(1),N}_{p,q} = F^0_{p,q} \subset L_1^{\rm loc}$$

\noindent and in case 3

$$F^{\sigma,N}_{p,q} \hookrightarrow F^{(1),N}_{p,2} = F^0_{p,2} \subset L_1^{\rm loc}.$$

\bigskip

(ii) Now we prove the necessity of the conditions sated in cases 1 and 2.

If we assume $\; F^{\sigma,N}_{p,q} \subset L_1^{\rm loc}$,
by Proposition \ref{BF}
it follows
$$B^{\sigma,N}_{p,\min\{ p,q \}} \subset L_1^{\rm loc}~~.$$
In case $0 < p \le 1$ and $ 0 < q \le \infty$ it holds $ 0 <  \min\{ p,q \} \leq 1$ and
 by Theorem \ref{1_nec_suf_B}, part 1, we have $(\sigma_j^{-1} N_j^{n(\frac{1}{p}-1)})_{j \in \No} \in \ell_{\infty}$.
\\In case  $ 1 < p < \infty $ and $ 0 < q \le 2$ it holds $ 0 <  \min\{ p,q \} \le \min\{ p,2\}$ and
 by Theorem \ref{1_nec_suf_B}, part 2, we have $(\sigma_j^{-1})_{j \in \No} \in \ell_\infty$.
 \bigskip

(iii) Finally, the proof of the necessity of the condition sated in case 3 is the same, {\em mutatis mutandis}, as in the last part in Theorem
 \ref{1_nec_suf_B} because, under the conditions of Theorem \ref{functionseq},
 $$\psi W \in B^\beta_{p,q} \; \Leftrightarrow \; (\beta_k b_k)_{k \in \N} \in \ell_q \; \; \Leftrightarrow \;\psi W \in F^\beta_{p,q} ~.$$
\end{proof}

\begin{exa} {\em Let $\sigma_j:= 2^{sj} (1+j)^b$, where $b>0$
and $N_j = 2^j$. Then
$$B^{s_1}_{p,q} \hookrightarrow
B^{\sigma}_{p,q}
  \hookrightarrow   B^{s}_{p,q}$$
  for any $s_1 > s$, that is, we have a scale in smoothness finer
  than in the classical case.

  Naturally we obtain in some cases also really finer results
  concerning
  the embedding of  $B^{\sigma}_{p,q}$ in $L_1^{\rm loc}$.
\\[1mm] (i) ~~Let $0<p\leq 1$ and $ 1 < q \le \infty$. Then the classical result
gives the embedding if and only if $s > n(\frac{1}{p} - 1)$, while in
our example the embedding is still true if $s = n(\frac{1}{p} -
1)$ and in addition $ b > \frac{q-1}{q}$.
\\[1mm] (ii)~~Let $1<p\le 2 $ and $ \min\{ p,2
\} < q \le \infty$. Then, in contrast to the classical case,
$s=0$ is possible if and only if  $b
> \frac{q-p}{pq}$ (meaning $b > \frac{1}{p}$ if $q=\infty$).
\\[1mm] (iii) Let $2<p\le \infty $ and $  \min\{ p,2
\} < q \le \infty$. Then again  $s=0$ is possible if and only if $b
> \frac{q-2}{2q}$ (meaning $b > \frac{1}{2}$ if $q=\infty$).
\\[1mm] (iv) The same is true for the  $F$-spaces in the
case $1\le p <
  \infty $ and $ 2 < q \le\infty$, where instead of $s> 0 $ now $s =
  0$ together with $b >\frac{q-2}{2q}$ (meaning $b > \frac{1}{2}$ if $q=\infty$) is permitted.}
\end{exa}

The following extends \cite[Cor. 3.3.1]{ST95} to our setting:

\begin{cor}
\label{corollary}
Let $N$ and $\sigma$ be admissible sequences with $N$ satisfying
also Assumption \ref{assump}.
\\[2mm](i)~Let $0<p < \infty \,,\, 0 < q \leq \infty$.
The following two assertions are equivalent:
\\[2mm] \hspace*{1cm} $~~~~~~\; B^{\sigma,N}_{p,q} \subset L_1^{\rm loc}$
\\ and
\\\hspace*{1cm} $\; ~~~~~~B^{\sigma,N}_{p,q} \hookrightarrow L_{\max\{1,p\}}$ 
.
\\[2mm](ii)~Let $\, 0 < q \leq \infty$.
The following two assertions are equivalent:
\\[2mm] \hspace*{1cm} $~~~~~~\; B^{\sigma,N}_{\infty,q} \subset L_1^{\rm loc}$
\\ and
\\\hspace*{1cm} $\; ~~~~~~B^{\sigma,N}_{\infty,q} \hookrightarrow bmo \;$.
\\[2mm] (iii)~Let $0<p < \infty \,,\, 0 < q \leq \infty$. The following two assertions are equivalent:
\\[2mm]\hspace*{1cm} $~~~~~~\; F^{\sigma,N}_{p,q} \subset L_1^{\rm loc}$
\\ and
\\ \hspace*{1cm}$~~~~~~\; F^{\sigma,N}_{p,q} \hookrightarrow L_{\max\{1,p\}}$ 
.

\end{cor}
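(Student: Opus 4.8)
The plan is to prove each of the three equivalences by splitting it into its two implications, one of which is essentially free while the other carries all the content. For the direction ``(embedding) $\Rightarrow$ ($\subset L_1^{\rm loc}$)'' there is nothing to do beyond an inclusion between target spaces: every function that is locally $\max\{1,p\}$-integrable is locally integrable (as $\max\{1,p\}\geq1$), so $L_{\max\{1,p\}}\subset L_1^{\rm loc}$, and likewise $bmo\subset L_1^{\rm loc}$; thus a continuous embedding into either space forces containment in $L_1^{\rm loc}$. So I would dispatch this direction in one line in all of (i), (ii), (iii).

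The substantial direction is ``($\subset L_1^{\rm loc}$) $\Rightarrow$ (embedding)'', and here I would exploit that the sufficiency parts of Theorems \ref{1_nec_suf_B} and \ref{1_nec_suf_F} have already done nearly all of the work. Assuming $\subset L_1^{\rm loc}$, the characterizations there tell us which summability condition on $(\sigma,N)$ holds; and in those sufficiency arguments it was precisely those conditions which, via Proposition \ref{CF-t1} (resp.\ Proposition \ref{Fcounterpart}) together with Corollary \ref{level zero}, produced a continuous embedding of the given space into a \emph{classical} level-zero space $B^0_{p,q_0}$ (resp.\ $F^0_{p,q_0}$), with $q_0\in\{q,p,2\}$ according to the regime and always $q_0\leq\min\{p,2\}$. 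The only new ingredient is to replace the final step of those proofs --- where the classical Theorem \ref{classical} was invoked merely to conclude $B^0_{p,q_0}\subset L_1^{\rm loc}$ --- by the stronger classical statement \cite[Cor. 3.3.1]{ST95}, which upgrades this to $B^0_{p,q_0}\hookrightarrow L_{\max\{1,p\}}$ (and, for $p=\infty$, to $B^0_{\infty,q_0}\hookrightarrow bmo$, valid for $q_0\leq2$ since $bmo=F^0_{\infty,2}$ and $B^0_{\infty,2}\hookrightarrow F^0_{\infty,2}$). Composing the two embeddings yields (i) and (ii) for $1<p\leq\infty$ and (iii) for $1\leq p<\infty$. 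For the Besov case $0<p\leq1$ the route is even shorter: the characterizing condition $(\sigma_j^{-1}N_j^{n(\frac1p-1)})_{j\in\No}\in\ell_{q'}$ is exactly the hypothesis of Proposition \ref{BinLpbar}, which delivers $B^{\sigma,N}_{p,q}\hookrightarrow L_{\max\{1,p\}}=L_1$ at once.

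The one regime that resists the level-zero reduction, and which I expect to be the main obstacle, is the Triebel--Lizorkin case $0<p<1$ with arbitrary $q$ (the first line of (iii)): there the characterizing condition is only $(\sigma_j^{-1}N_j^{n(\frac1p-1)})_{j\in\No}\in\ell_\infty$, so one cannot pass through Proposition \ref{BF} into a Besov space unless $q\leq1$. Instead I would set $\tilde\sigma_j:=N_j^{n(\frac1p-1)}$, note that $(\sigma_j^{-1}\tilde\sigma_j)_{j\in\No}\in\ell_\infty$, and apply Proposition \ref{Fcounterpart} (with $q_1=q_2=q$, so $q^\ast=\infty$) to get $F^{\sigma,N}_{p,q}\hookrightarrow F^{\tilde\sigma,N}_{p,q}$. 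Standardization (Theorem \ref{standard}) then identifies $F^{\tilde\sigma,N}_{p,q}$ with $F^{\tilde\beta}_{p,q}$, where $\tilde\beta_j=N_{k(j)}^{n(\frac1p-1)}$; the definition \eqref{beta} of $k(j)$ together with the admissibility of $N$ gives $N_{k(j)}\sim2^j$, so $\tilde\beta$ is equivalent to the classical sequence $(2^{jn(\frac1p-1)})_j$ and hence $F^{\tilde\sigma,N}_{p,q}=F^{n(\frac1p-1)}_{p,q}$. A last application of \cite[Cor. 3.3.1]{ST95} then yields $F^{n(\frac1p-1)}_{p,q}\hookrightarrow L_1=L_{\max\{1,p\}}$ for \emph{all} $q$, closing the case. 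Apart from this critical-smoothness detour, everything else is bookkeeping: for each regime of $(p,q)$ one checks that the exponent $q_0$ (or the critical sequence) is exactly the one for which the relevant embedding proposition consumes precisely the available summability condition, and that the resulting classical space indeed embeds into $L_{\max\{1,p\}}$ or $bmo$.
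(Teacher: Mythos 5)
Your proposal is correct and follows the same skeleton as the paper: the embedding-to-$L_1^{\rm loc}$ direction is trivial, and the converse is obtained by feeding the necessity parts of Theorems \ref{1_nec_suf_B} and \ref{1_nec_suf_F} back into the embedding machinery (Propositions \ref{CF-t1}, \ref{Fcounterpart}, Corollary \ref{level zero}) to land in classical level-zero spaces. Two of your sub-arguments genuinely differ from the paper's, both legitimately. First, for $B^{\sigma,N}_{p,q}$ with $0<p\le 1$ you invoke Proposition \ref{BinLpbar} directly, whose hypothesis is exactly the characterizing condition; the paper instead chains $B^{\sigma,N}_{p,q}\hookrightarrow B^{(1),N}_{1,1}=B^0_{1,1}=F^0_{1,1}\hookrightarrow F^0_{1,2}=h_1\hookrightarrow L_1$. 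Yours is shorter; the paper's detour through $h_1$ is what feeds its closing remark that the embedding can be upgraded to $h_1$ when $p\le 1$. Second, for $F^{\sigma,N}_{p,q}$ with $0<p<1$ the paper uses Theorem \ref{BFB} to pass to $B^{\sigma'',N}_{1,p}$ with $\sigma''_j=\sigma_jN_j^{n(1-\frac1p)}$ and then proceeds as in the Besov case, whereas you stay inside the $F$-scale: Proposition \ref{Fcounterpart} gives $F^{\sigma,N}_{p,q}\hookrightarrow F^{\tilde\sigma,N}_{p,q}$ with $\tilde\sigma_j=N_j^{n(\frac1p-1)}$, and standardization (Theorem \ref{standard}, using $N_{k(j)}\sim 2^j$) identifies the target with the classical critical space $F^{n(\frac1p-1)}_{p,q}$, to which \cite[Cor. 3.3.1]{ST95} applies for all $q$. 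Both routes are sound; your standardization detour avoids Theorem \ref{BFB} entirely but requires checking the equivalence $\tilde\beta_j\sim 2^{jn(\frac1p-1)}$, which does hold by the definition \eqref{beta} and the admissibility of $N$. Elsewhere (including the $p=\infty$ Besov case, where you correctly avoid relying on \cite[Cor. 3.3.1]{ST95} and instead use $bmo=F^0_{\infty,2}$ and $B^0_{\infty,2}\hookrightarrow F^0_{\infty,2}$) your bookkeeping of the exponents $q_0\le\min\{p,2\}$ matches the paper's case analysis.
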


\begin{proof}
Since the implication in which one concludes that $B^{\sigma,N}_{p,q}$ or $F^{\sigma,N}_{p,q}$ is in $L_1^{\rm loc}$ is obvious, we concentrate on the reverse one. So, let us assume that $\; B^{\sigma,N}_{p,q} \subset L_1^{\rm loc}$ when proving (i) and (ii) above and that $\; F^{\sigma,N}_{p,q} \subset L_1^{\rm loc}$ when proving (iii).

In what follows we shall use the following classical facts without further notice:
\medskip

\begin{tabular}{ll}
$F^{0}_{p,2}=L_p, \quad 1<p<\infty$ & (\cite[The. 2.5.6(i)]{Tri83}); \\[3mm]
$F^{0}_{1,2}= h_1$ & (\cite[The. 2.5.8/1]{Tri83}); \\[3mm]
$h_1\hookrightarrow L_1$ & (\cite[Rem. 2.5.8/4]{Tri83}); \\[3mm]
$F^{0}_{\infty,2}= bmo$ & (\cite[The. 2.5.8/2]{Tri83}); \\[3mm]
$B^0_{\infty,2} \hookrightarrow F^{0}_{\infty,2}$ & (cf. \cite[Prop. 2.3.2/2(iii), The. 2.11.2]{Tri83}).
\end{tabular}

\bigskip

(i) The $B$ case when $0<p<\infty$.

\medskip

First let $\; 0 < p \leq 1\;$ and $\; 0<q \leq \infty$.
\\We have, by Theorem \ref{1_nec_suf_B}, that $(\sigma_j^{-1}
N_j^{n(\frac{1}{p}-1)})_{j \in \No} \in \ell_{q'}$ and by
Proposition \ref{CF-t1} and Corollary \ref{level zero} it follows
$$B^{\sigma,N}_{p,q} \hookrightarrow B^{(1),N}_{1,1} = B^{0}_{1,1} = F^{0}_{1,1} \hookrightarrow  F^{0}_{1,2} = h_1 \hookrightarrow L_1 = L_{\max\{1,p\}}\;.$$

In case $\; 1 < p < \infty\;$ and $\; 0 < q \leq \min\{ p,2 \}$
Theorem \ref{1_nec_suf_B} implies $(\sigma_j^{-1})_{j \in \No} \in
\ell_\infty$ and by Proposition \ref{BF}, Proposition
\ref{Fcounterpart} and  Corollary \ref{level zero} we have
$$\; B^{\sigma,N}_{p,q} \hookrightarrow F^{\sigma,N}_{p,2} \hookrightarrow
F^{(1),N}_{p,2} = F^{0}_{p,2}= L_p = L_{\max\{1,p\}}\;.$$

If $\; 1< p \leq 2\;$ and $\; \min\{ p,2 \} < q \leq \infty$, then Theorem \ref{1_nec_suf_B} implies
$(\sigma_j^{-1})_{j \in \No} \in \ell_{\frac{pq}{q-p}}$ and
combining Proposition \ref{CF-t1}, Corollary \ref{level zero} and
Proposition \ref{Fcounterpart} we get
$$\; B^{\sigma,N}_{p,q} \hookrightarrow B^{(1),N}_{p,p} = B^{0}_{p,p} = F^{0}_{p,p} \hookrightarrow F^{0}_{p,2}= L_p = L_{\max\{1,p\}}\;.$$

Finally in case $\; 2 < p < \infty\;$ and $\; \min\{ p,2 \} < q
\leq \infty$
  Theorem \ref{1_nec_suf_B} gives $(\sigma_j^{-1})_{j \in \No} \in
 \ell_{\frac{2q}{q-2}}$ and again Proposition \ref{CF-t1}, Corollary \ref{level zero} and Proposition \ref{BF} lead to
$$\; B^{\sigma,N}_{p,q} \hookrightarrow B^{(1),N}_{p,2} = B^{0}_{p,2}  \hookrightarrow F^{0}_{p,2}= L_p = L_{\max\{1,p\}}\;.$$

(ii) The $B$ case when $p=\infty$.

\medskip

Then we have, similarly as above, that,
in case $\; 0 < q \leq \min\{ p,2 \}$, $(\sigma_j^{-1})_{j \in
\No} \in \ell_\infty$ and
$$\; B^{\sigma,N}_{\infty,q} \hookrightarrow
B^{(1),N}_{\infty,2} = B^{0}_{\infty,2}\hookrightarrow
F^{0}_{\infty,2}= bmo  \;;$$in case $\; \min\{ p,2
\} < q \leq \infty$, $(\sigma_j^{-1})_{j \in \No} \in
\ell_{\frac{2q}{q-2}}$ and
$$\; B^{\sigma,N}_{\infty,q} \hookrightarrow
B^{(1),N}_{\infty,2} = B^{0}_{\infty,2}\hookrightarrow
F^{0}_{\infty,2}= bmo  \;.$$

(iii) The $F$ case.

\medskip

Let first  $\; 0 < p < 1\;$ and $\; 0<q \leq \infty$. Then by
Theorem \ref{1_nec_suf_F} it holds $(\sigma_j^{-1}
N_j^{n(\frac{1}{p}-1)})_{j \in \No} \in \ell_{\infty}$. By Theorem
\ref{BFB} we obtain
$$F^{\sigma,N}_{p,q} \hookrightarrow B^{\sigma'',N}_{1,p} \quad \mbox{ with }  \; \sigma''_j = \sigma_j N_j^{n(1 - \frac{1}{p})}\;.$$
 Moreover
by Proposition \ref{CF-t1} and  Corollary \ref{level zero} we get
$$B^{\sigma'',N}_{1,p} \hookrightarrow B^{(1),N}_{1,1} = B^{0}_{1,1} = F^{0}_{1,1} \hookrightarrow  F^{0}_{1,2} = h_1 \hookrightarrow L_1 = L_{\max\{1,p\}}\;.$$

If $\; 1 \leq p < \infty\;$ and $\; 0 < q \leq 2 $
 we obtain $(\sigma_j^{-1})_{j \in \No} \in \ell_\infty$ and by
 Proposition \ref{Fcounterpart} and Corollary \ref{level zero} we have
$$ \; F^{\sigma,N}_{p,q}   \hookrightarrow F^{(1),N}_{p,2} = F^{0}_{p,2}= L_p = L_{\max\{1,p\}} ~~~~\mbox{in case }  1 < p$$
and
$$\; F^{\sigma,N}_{1,q}   \hookrightarrow F^{(1),N}_{1,2} = F^{0}_{1,2}= h_1 \hookrightarrow L_1 = L_{\max\{1,p\}} ~~~~\mbox{in case }  p = 1~.$$

At last, in case $\; 1 \leq p < \infty\;$ and $\; 2 < q \leq
\infty$ we get $(\sigma_j^{-1})_{j \in \No} \in
\ell_{\frac{2q}{q-2}}$ and in a similar way by Proposition
\ref{Fcounterpart} and Corollary \ref{level zero}
$$\; F^{\sigma,N}_{p,q}   \hookrightarrow F^{0}_{p,2} \hookrightarrow  L_p = L_{\max\{1,p\}} ~.$$
\end{proof}

\begin{rem}
{\em It follows from the preceding proof that, as in the classical case, we also have in case $0 < p \le 1$  that
\\[2mm] \hspace*{1cm} $\; B^{\sigma,N}_{p,q} \subset L_1^{\rm loc}~~$
if and only if   $\; ~~~B^{\sigma,N}_{p,q} \hookrightarrow h_1$
 \\and
\\[2mm]  \hspace*{1cm} $\; F^{\sigma,N}_{p,q} \subset L_1^{\rm loc}~~~~$
if and only if   $\; ~~~F^{\sigma,N}_{p,q} \hookrightarrow h_1\;.$}
\end{rem}


\begin{thebibliography}{10}

\bibitem{Bri-tese}
M.~Bricchi.
\newblock {\em Tailored function spaces and related h-sets}.
\newblock PhD thesis, Friedrich-Schiller-Universit{\"a}t Jena, 2001.

\bibitem{BrMo03}
M.~Bricchi and S.~D. Moura.
\newblock Complements on growth envelopes of spaces with generalized smoothness
  in the sub-critical case.
\newblock {\em Z. Anal. Anwend.}, 22(2):383--398, 2003.

\bibitem{CaLe05}
A.~Caetano and H.-G. Leopold.
\newblock Local growth envelopes of {T}riebel-{L}izorkin spaces of generalized
  smoothness.
\newblock {\em J. Fourier Anal. Appl.}, 12:427--445, 2006.

\bibitem{CaFa04}
A.~M. Caetano and W.~Farkas.
\newblock Local growth envelopes of {B}esov spaces of generalized smoothness.
\newblock {\em Z. Anal. Anwendungen}, 25:265--298, 2006.

\bibitem{Abel-tese}
A.~Carvalho.
\newblock {\em Fractal geometry, oscillation and smoothness of continuous
  functions}.
\newblock PhD thesis, Univ. Aveiro, 2008.

\bibitem{FaLe06}
W.~Farkas and H.-G. Leopold.
\newblock Characterisations of function spaces of generalised smoothness.
\newblock {\em Ann. Mat. Pura Appl.}, 185(1):1--62, 2006.

\bibitem{HLP}
G.~Hardy, J.~E. Littlewood, and G.~P{\'o}lya.
\newblock {\em Inequalities}.
\newblock Cambridge University Press, reprinted second edition, 1994.

\bibitem{Kah64}
J.-P. Kahane.
\newblock Lacunary \protect{T}aylor and \protect{F}ourier series.
\newblock {\em Bull. Amer. Math. Soc.}, 70:199--213, 1964.

\bibitem{Mou01a}
S.~Moura.
\newblock Function spaces of generalised smoothness.
\newblock {\em Dissertationes Math.}, 398:88 pp., 2001.

\bibitem{Mou01b}
S.~Moura.
\newblock {\em Function Spaces of Generalised Smoothness, Entropy Numbers,
  Applications}.
\newblock PhD thesis, University of Coimbra, 2001.

\bibitem{ScTr87}
H.-J. Schmeisser and H.~Triebel.
\newblock {\em Topics in Fourier Analysis and Function Spaces}.
\newblock Geest \& Portig, 1987.

\bibitem{ST95}
W.~Sickel and H.~Triebel.
\newblock H{\"o}lder inequalitites and sharp embeddings in function spaces of
  ${B}_{pq}^s$and ${F}_{pq}^s$ type.
\newblock {\em Z. Anal. Anwendungen}, 14(1):105--140, 1995.

\bibitem{Tri83}
H.~Triebel.
\newblock {\em Theory of Function Spaces}.
\newblock Birkh{\"a}user, Basel, 1983.

\bibitem{Tri97}
H.~Triebel.
\newblock {\em Fractals and Spectra}.
\newblock Birkh{\"a}user, Basel, 1997.

\end{thebibliography}

\end{document}